\theoremstyle{plain}
\newtheorem{theorem}{Theorem}[section]
\newtheorem{proposition}[theorem]{Proposition}
\newtheorem{lemma}[theorem]{Lemma}
\newtheorem{corollary}[theorem]{Corollary}
\theoremstyle{definition}
\theoremstyle{remark}
\renewcommand{\thefootnote}{\arabic{footnote}}
\def\R{\mathbb R}
\def\Ent{\mathop{\rm Ent}\nolimits}
\def\al{\alpha}
\def\om{\omega}
\def\Om{\Omega}
\def\be{\beta}
\def\ga{\gamma}
\def\de{\delta}
\def\De{\Delta} 
\def\Gam{\Gamma}
\def\si{\sigma}
\def\lam{\lambda}
\def\vphi{\varphi}
\def\ep{\epsilon}
\def\na{\nabla}
\def\pa{\partial}
\def\la{\langle} 
\def\ra{\rangle} 
\def\lt{\left}
\def\rt{\right}
\def\mL{\mathcal{L}}
\def\mM{\mathcal{M}}
\def\mE{\mathcal{E}}
\def\mK{\mathcal K}
\def\mH{\mathcal H}
\def\mM{\mathcal M}
\def\dWp{\dot{W}^{p}(\R^n)}
\def\sp{\R^n}
\def\i0i{\int_0^\infty}
\def\t{\tilde}
\def\Klp{K_{\lam,p}(f)}
\def\Blp{B_{\lam,p}(f)}
\numberwithin{equation}{section}
\title{New approach to the affine P\'olya-Szeg\"o principle and the stability version of the affine Sobolev inequality}
\author{Van Hoang Nguyen\footnote{School of Mathematical Sciences, Tel Aviv University, Tel Aviv 69978, \textsc{Israel}.}}
\begin{document}
\maketitle


\renewcommand{\thefootnote}{}

\footnote{Email: vanhoang0610@yahoo.com}


\footnote{2010 \emph{Mathematics Subject Classification\text}: 26D07, 46E35, 52A40.}

\footnote{\emph{Key words and phrases\text}: affine P\'olya-Szeg\"o principle, Busemann-Petty centroid inequality, affine Sobolev-type inequalities, stability estimates}

\renewcommand{\thefootnote}{\arabic{footnote}}
\setcounter{footnote}{0}

\begin{abstract}
Inspired by a recent work of Haddad, Jim\'enez and Montenegro, we give a new and simple approach to the recently established general affine P\'olya-Szeg\"o principle. Our approach is based on the general $L_p$ Busemann-Petty centroid inequality and does not rely on the general $L_p$ Petty projection inequality or the solution of the $L_p$ Minkowski problem. A Brothers-Ziemer-type result for the general affine P\'olya-Szeg\"o principle is also established. As applications, we reprove some sharp affine Sobolev-type inequalities and settle their equality conditions. We also prove a stability estimate for the affine Sobolev inequality on functions of bounded variation by using our new approach. As a corollary of this stability result, we deduce a stability estimate for the affine logarithmic--Sobolev inequality.
\end{abstract}

\section{Introduction}
The first (symmetric) affine P\'olya-Szeg\"o principle was established in \cite{CLYZ09} by Cianchi, Lutwak, Yang and Zhang. More recently, Haberl, Schuster and Xiao \cite{HSX12} obtained an asymmetric version of the affine P\'olya-Szeg\"o principle and proved that it is stronger and directly implies the symmetric form of Cianchi et al. The same argument also shows that the asymmetric affine P\'olya-Szeg\"o principle directly implies the general affine P\'olya-Szeg\"o principle which was first explicitly stated by Wang \cite{TW15} (which includes the symmetric and asymmetric versions as special cases). These affine P\'olya-Szeg\"o-type principles state that the general (also, symmetric and asymmetric) affine energy of a function on $\R^n$ does not increase under symmetric rearrangement. They are affine analogues of the classical P\'olya-Szeg\"o principle \cite{PS51} which plays a fundamental role in the solution of several variational problems from different areas such as isoperimetric inequalities, sharp forms of Sobolev inequalities and sharp a priori estimates of solutions to second-order elliptic or parabolic boundary value problems; see, for example, \cite{BZ88, Kaw85,Kaw86,Ke06,Ta76,Ta93}. It is known that the affine P\'olya-Szeg\"o-type principles are stronger and directly imply the classical P\'olya-Szeg\"o principle. Among their applications are a number of sharp affine Sobolev-type inequalities \cite{CLYZ09,LYZ02,Zhai11,TW15} (like Sobolev, Moser-Trudinger, Morrey-Sobolev, Gagliardo-Nirenberg, and logarithmic-Sobolev inequalities) which are also stronger than their classical Euclidean counterparts. The proof of the general (symmetric and asymmetric) affine P\'olya-Szeg\"o principle relies on tools from the $L_p$ Brunn-Minkowski theory of convex bodies (see \cite{Lut93,Lut96,LYZ00,LYZ02, LYZ04}). In particular, on the $L_p$ Petty projection inequality \cite{LYZ00} (see \cite{CG02} for an alternative proof) and on the solution of the normalized $L_p$ Minkowski problem \cite{LYZ04}.


The main aim of the present paper is to give a new proof for these affine P\'olya-Szeg\"o-type principles. Our approach is based on a recent work of Haddad, Jim\'enez and Montenegro (see \cite{HJM15}) where they give a new proof of some sharp (symmetric) affine Sobolev-type inequalities (like Sobolev, Gagliardo-Nirenberg and logarithmic-Sobolev inequalities \cite{LYZ02,Zhai11}) by using the $L_p$ Busemann-Petty centroid inequality \cite{LYZ00}. We show that their method also can be applied to general cases considered in \cite{HS09b,HSX12,TW15}, and hence gives an alternative proof for the results in \cite{CLYZ09,HSX12,TW15}. We emphasize here that neither the (general) $L_p$ Petty projection inequality \cite{HS09,LYZ00} nor the solution of the normalized $L_p$ Minkowski problem for symmetric convex bodies or for polytopes is used in our proof while they play crucial and important roles in the proofs given in \cite{CLYZ09,HSX12,TW15}.

One advantage of our approach is that it enables us to prove a Brothers-Ziemer-type result for the general affine P\'olya-Szeg\"o principle which was left open in \cite{HSX12,TW15} by restriction of their method. Such a result for the (symmetric) affine P\'olya-Szeg\"o principle was recently proved in \cite{TW13} by Wang. In fact, Wang \cite{TW13} proved a bit more, namely a stability estimate for the affine P\'olya-Szeg\"o principle by using a recent stability result for the $L_p$ Petty projection inequality establisheb by B\"or\"oczky \cite{KB13} and a quantitative P\'olya-Szeg\"o pinciple for convex symmetrization \cite{ER09}. We do not know how to obtain the same result for the general case of the affine P\'olya-Szeg\"o principle considered in this paper. However, as a consequence of our Brothers-Ziemer-type result, we identify all extremal functions for the new affine Sobolev-type inequalities established in \cite{HS09,HS09b,TW15}.

For $p\geq 1$ and $n\geq 2$, let $W^{1,p}(\R^n)$ denote the space of real-valued $L^p$ functions on $\R^n$ with weak $L^p$ partial derivatives. We use $|\cdot|$ and $B_2^n$ to denote the standard Euclidean norm on $\R^n$ and its unit ball, respectively. We write $\|\cdot\|_p$ for the usual $L^p$ norm of a function $f$ on $\R^n$. For $f\in W^{1,p}(\R^n)$, we set
$$\|\na f\|_p = \lt(\int_{\R^n} |\na f(x)|^p dx\rt)^{1/p}.$$

Given a function $f\in W^{1,p}(\R^n)$, its distributional function $\mu_f:[0,\infty)\to [0,\infty)$ is defined by
$$\mu_f(t) = V(\{x\in\R^n\, :\, |f(x)| > t\}),$$
where $V$ denotes the Lebesgue measure on $\R^n$. The decreasing rearrangement $f^*: [0,\infty) \to [0,\infty]$ of $f $ is defined by
$$f^*(s) = \inf\{t > 0\, :\, \mu_f(t) \leq s\}.$$
Let $K$ denote a compact convex subset of $\R^n$ containing the origin in its interior. We write $\t{K}$ to denote the dilation of $K$ of volume $\om_n:=V(B_2^n)$, its Minkowski functional is defined by
$$\|x\|_{\t{K}} = \inf\{\lam > 0\, :\, x\in \lam \t{K}\}.$$
The convex symmetrization $f^K$ of $f$ with respect to $K$ is defined as follows (see \cite{AFTL97})
$$f^K(x) = f^*(\om_n \|x\|_{\t{K}}^n).$$
When $K$ is an origin-centered Euclidean ball, we obtain the symmetric rearrangement function $f^\star$ of $f$, that is,
$$f^\star(x) = f^*(\om_n |x|^n).$$

The classical P\'olya-Szeg\"o principle states that if $f\in W^{1,p}(\R^n)$ for some $p \geq 1$, then $f^\star \in W^{1,p}(\R^n)$ and 
\begin{equation}\label{eq:classicalPS}
\|\na f\|_p \geq \|\na f^\star\|_p.
\end{equation}
In the general affine P\'olya-Szeg\"o principle, the $L^p$ norm of the Euclidean length of the weak gradient is replaced by a general $L_p$ affine energy defined for $f\in W^{1,p}(\R^n)$ by
\begin{equation}\label{eq:affineenergy}
\mE_{\lam,p}(f) = c_{n,p} \lt(\int_{S^{n-1}} \lt(\int_{\R^n} \lt((1-\lam) (D_uf(x))_+^p + \lam (D_u f(x))_-^p\rt)dx\rt)^{-n/p} du\rt)^{-1/n},
\end{equation}
where $\lam \in [0,1]$, $c_{n,p} = (n\om_n)^{1/n}(n\om_n\om_{p-1}/\om_{n+p-2})^{1/p}$, $D_uf$ is the directional derivative of $f$ in the direction $u$, and $(D_uf)_+ = \max\{D_uf,0\}$ and $(D_uf)_- =\max\{-D_u f,0\}$. Note that the constant $c_{n,p}$ is chosen such that 
\begin{equation}\label{eq:normalized}
\mE_{\lam,p}(f^\star) = \|\na f^\star\|_p.
\end{equation}
We emphasize the remarkable and important fact that $\mE_{\lam,p}$ is invariant under volume preserving affine transformations on $\R^n$, in contrast to $\|\na f\|_p$ which is invariant under rigid motions only. Note that $\mE_{1/2,p}(f)$ is exactly the $L_p$ affine energy $\mE_p(f)$ defined by Cianchi, Lutwak, Yang and Zhang \cite{CLYZ09}, and $\mE_{0,p}(f)$ is the asymmetric $L_p$ affine energy $\mE_p^+(f)$ defined by Haberl, Schuster and Xiao \cite{HSX12}.

The first main result of this paper reads as follows:

\begin{theorem}\label{maintheorem}
If $p > 1$ and $f\in W^{1,p}(\R^n)$, then 
\begin{equation}\label{eq:PSprin}
\mE_{\lam,p}(f) \geq \mE_{\lam,p}(f^\star).
\end{equation}
Moreover, if $f$ is a nonnegative function such that
\begin{equation}\label{eq:cond}
V(\{x\,:\, |\na f^\star(x)| =0\} \cap \{x\, :\, 0< f^\star(x) < {\rm ess }\sup f\}) =0,
\end{equation}
where ${\rm ess } \sup f$ denotes the essential supremum of $f$, then equality holds in \eqref{eq:PSprin} if and only if there exists $x_0\in \R^n$ such that $f(x) = f^E(x+x_0)$ a.e. on $\R^n$, here $E$ is an origin-centered ellipsoid (i.e., an image of $B_2^n$ under an invertible linear map).
\end{theorem}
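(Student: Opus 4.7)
The plan is to adapt the strategy of Haddad, Jim\'enez and Montenegro \cite{HJM15} to the general affine energy $\mE_{\lam,p}$. For each $u\in S^{n-1}$, consider the quantity
$$H_{\lam,p}(f,u) := \int_{\R^n}\bigl((1-\lam)(D_u f(x))_+^p + \lam (D_u f(x))_-^p\bigr)dx,$$
which is exactly what appears inside the $-n/p$ power in \eqref{eq:affineenergy}. The first step is to apply the co-area formula to rewrite $H_{\lam,p}(f,u)$ as a layer-cake integral over $t>0$ of a weighted $p$-th moment of $\langle u,\nu\rangle_\pm$ on the reduced boundary $\partial^*\{|f|>t\}$. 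This identifies $H_{\lam,p}(f,\cdot)^{1/p}$, up to normalization, with the support function of a general $L_p$ centroid-type convex body $\Gamma_{\lam,p}^* f$ built from the super-level sets $\{|f|>t\}$ of $f$.

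Next, I would compare $\Gamma_{\lam,p}^* f$ with $\Gamma_{\lam,p}^* f^\star$. Since $f$ and $f^\star$ share the same distribution function, the corresponding super-level sets have the same volume for almost every $t$, so the general $L_p$ Busemann--Petty centroid inequality---the only convex-geometric ingredient needed, and notably avoiding both the $L_p$ Petty projection inequality and the $L_p$ Minkowski problem---yields the desired comparison level-by-level. Integrating the resulting bounds against $du$ on $S^{n-1}$ and taking the appropriate power produces $\mE_{\lam,p}(f)\geq\mE_{\lam,p}(f^\star)$; the constant $c_{n,p}$ is chosen precisely so that the constants match, in accordance with \eqref{eq:normalized}.

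For the equality case I would exploit the rigidity in Busemann--Petty: equality forces the underlying convex body to be an origin-centered ellipsoid. Applied at each level, this tells me that for almost every $t>0$ the super-level set $\{|f|>t\}$ is, up to a common translation by some $-x_0\in\R^n$, an origin-centered ellipsoid $E_t$. The hypothesis \eqref{eq:cond} is a Brothers--Ziemer-type condition ruling out positive-measure plateaus of $f^\star$; combined with the nesting $\{|f|>s\}\supset\{|f|>t\}$ for $s<t$, it forces all the $E_t$ to be dilates of one fixed ellipsoid $E$, which translates into $f(x)=f^E(x+x_0)$ almost everywhere.

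The main obstacle---and precisely the step left open in \cite{HSX12,TW15}---is this last coherence argument: Busemann--Petty equality produces a potentially $t$-dependent ellipsoid $E_t$, and one must rule out that the \emph{shape} (not just the size) of $E_t$ varies with $t$. The Brothers--Ziemer condition \eqref{eq:cond} is tailor-made for this, and the technical heart of the proof will lie in using the absence of plateaus of $f^\star$ to transfer the requisite uniformity of shape back to $f$ through the rigidity of the $L_p$ Busemann--Petty centroid inequality.
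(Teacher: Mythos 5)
Your proposal captures the general spirit---use the $L_p$ Busemann--Petty centroid inequality rather than the $L_p$ Petty projection inequality or the $L_p$ Minkowski problem---but the actual architecture differs in an essential way, and the ``level-by-level'' scheme you sketch has a genuine gap. The paper does \emph{not} decompose $H_{\lam,p}(f,u)$ by co-area and does not compare super-level sets of $f$ with those of $f^\star$ individually. Instead it builds a single convex body $B_{\lam,p}(f)=\{x: \|x\|_{p,\lam,f}\leq 1\}$ directly from the global gradient data (so that $H_{\lam,p}(f,\cdot)^{1/p}$ is the Minkowski functional of $B_{\lam,p}(f)$, i.e.\ the support function of its polar), shows via a polar-coordinate computation that the body $K_{\lam,p}(f)$ with support function $|||\cdot|||_{p,\lam,f}$ is, up to a scalar, the general $L_p$ centroid body $\Gamma_{\lam,p}B_{\lam,p}(f)$, applies the general Busemann--Petty centroid inequality \emph{once} to $B_{\lam,p}(f)$, and then handles the rearrangement by appealing to the P\'olya--Szeg\"o principle for convex symmetrization of Ferone and Volpicelli (Theorem~\ref{Convexsymme}), including its equality characterization under hypothesis~\eqref{eq:Condconvex}.

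Your level-by-level plan runs into two concrete problems. First, for $p>1$ the co-area decomposition of $\int ((1-\lam)(D_uf)_+^p + \lam(D_uf)_-^p)\,dx$ produces an inner integral over $\{f=t\}$ weighted by $|\nabla f|^{p-1}$, which is \emph{not} an intrinsic quantity of the level set, so it does not identify with the support function of a centroid body built only from $\{|f|>t\}$. Second, and more fundamentally, the sets $\{|f|>t\}$ need not be convex, so the Busemann--Petty centroid inequality simply does not apply to them individually; the centroid body construction $\Gamma_{\lam,p}K$ and its extremal rigidity are for $K\in\mK_0^n$. Because of this, the final ``coherence'' step you flag---showing that the $t$-dependent ellipsoids $E_t$ all have the same shape---is not something you can reach this way; in the paper that difficulty never arises, because it is delegated entirely to the equality case of the convex-symmetrization P\'olya--Szeg\"o principle: equality in the Busemann--Petty step forces $B_{\lam,p}(-f)$ to be an origin-centered ellipsoid, hence $K = TB_2^n$ for some $T\in SL_n$, and then $f=f^K(\cdot+x_0)$ follows from Theorem~\ref{Convexsymme} once condition~\eqref{eq:cond} is transferred from $f^\star$ to $f^K$ via $f^K(x)=f^\star(T^{-1}x)$. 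In short, the missing ingredient in your proposal is the Ferone--Volpicelli theorem; without it neither the inequality step nor the rigidity step closes.
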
  
It is easy to see that $\mE_{\lam,p}(f) = \mE_{1-\lam,p}(f)$ and that the function $\lam \to \mE_{\lam,p}(f)$ is concave on $[0,1]$. Consequently, we have 
\begin{equation*}\label{eq:concave}
\mE_{0,p}(f) \leq \mE_{\lam,p}(f) \leq \mE_{1/2,p}(f),\quad\forall\, \lam \in [0,1].
\end{equation*}
Thus, the asymmetric affine P\'olya-Szeg\"o principle is the strongest in the family of general affine P\'olya-Szeg\"o principles, while the affine P\'olya-Szeg\"o principle is the weakest one. It follows from Minkowski's inequality that
\begin{equation}\label{eq:compa}
\mE_{\lam,p}(f) \leq \|\na f\|_p.
\end{equation}
In view of \eqref{eq:normalized}, \eqref{eq:PSprin} and \eqref{eq:compa}, we obtain
\begin{equation}\label{eq:series}
\|\na f^\star\|_p = \mE_{\lam,p}(f^\star) \leq \mE_{\lam,p}(f) \leq \|\na f\|_p,\quad \forall\, \lam\in[0,1].
\end{equation} 

The second statement in Theorem \ref{maintheorem} is  a Brothers-Ziemer-type result for the general affine P\'olya-Szeg\"o principle. This result yields a necessary condition to ensure that the general $L_p$ affine energy of a function on $\R^n$ is equal to that of its symmetric rearrangement. The condition \eqref{eq:cond} is equivalent to the absolute continuity of $\mu_f$ and it can not be removed. In fact, Brothers and Ziemer constructed in \cite{BZ88} a smooth function $f$ such that $\|\na f\|_p = \|\na f^\star\|_p$ without $f$ being a translation of $f^\star$, if one does not assume condition \eqref{eq:cond}. For such a function $f$, we must have $\mE_{\lam,p}(f) = \mE_{\lam,p}(f^\star)$ by \eqref{eq:series}. However, $f$ is not a translation of $f^E$ for any origin-centered ellipsoid $E$.


A function $f$ on $\R^n$ is said to be of bounded variation if $f \in L^{n/(n-1)}(\R^n)$ and its weak gradient $Df$ is an $\R^n-$valued Radon measure of bounded variation. Let $|Df|$ denote the total variation of $Df$, and let $\si_f$ denote the Radon-Nikodym derivative of $Df$ with respect to $|Df|$. Then we have $|\si_f| = 1$ $|Df|-$almost everywhere (a.e.), and $d(Df)(x) = \si_f(x) d(|Df|)(x)$ (see \cite{EG}). Let $BV(\R^n)$ denote the space of functions of bounded variation on $\R^n$. For each $f\in BV(\R^n)$, its $L_1$ affine energy is defined as follows (see \cite{TW12})
$$\mE_{1}(f) = \frac{c_{n,1}}2\, \lt(\int_{S^{n-1}} \lt(\int_{\R^n} |\la u,\si_f(x)\ra| d(|Df|)(x)\rt)^{-n}\rt)^{-1/n}.$$  

The sharp affine Sobolev inequality for functions on $BV(\R^n)$ states that for any $f\in BV(\R^n)$ we have
\begin{equation}\label{eq:ZhangWang}
\mE_1(f) \geq n \om_n^{1/n} \|f\|_{n'},
\end{equation}
where $n' = n/(n-1)$. This inequality was first proved by Zhang \cite{GZ99} for compactly supported $C^1$ functions on $\R^n$, and then was extended to functions of bounded variation by Wang \cite{TW12}. The constant $n\om_n^{1/n}$ in \eqref{eq:ZhangWang} is sharp and it is attained by the characteristic functions of ellipsoids. By Minkowski's inequality, it is easy to see that 
$$\|f\|_{BV} = |Df|(\R^n) \geq \mE_1(f),$$ thus the affine Sobolev inequality \eqref{eq:ZhangWang} is stronger than the classical $L^1$ Sobolev inequality. It is a well known fact that the classical $L_1$ Sobolev inequality is the functional form of the classical isoperimetric inequality. It was first shown by Zhang \cite{GZ99} that the affine Sobolev inequality \eqref{eq:ZhangWang} is the functional form of an affine isoperimetric inequality (more precisely, the Petty projection inequality) which is also stronger than the classical isoperimetric inequality.

Our next result establishes a stability estimate for inequality \eqref{eq:ZhangWang}. More precisely, it was determined how to measure the distance (in some sense) from $f$ to the set of extremal functions of \eqref{eq:ZhangWang} in terms of the deviation between $\mE_1(f)$ and $n\om_n^{1/n} \|f\|_{n'}$. To state this result, let us introduce some notation. For $f\in BV(\R^n)$, we define
$$\de_a(f) = \frac{\mE_1(f)}{n\om_n^{1/n} \|f\|_{n'}} -1,$$
if $f\not= 0$ and $\de_a(0) = 0$. We remark that $\de_a(\cdot)$ is invariant under the action of invertible affine transformations. We call it the affine Sobolev deficit functional. The class of extremal functions for \eqref{eq:ZhangWang} is denoted by $\mM$, that is,
$$\mM=\{g_{a,x_0,\psi,r} = a \chi_{x_0 + ar\psi(B_2^n)}\,:\, a \not = 0, r>0, x_0\in \R^n, \psi\in SL_n\},$$
where $SL_n$ denotes the set of $n\times n$ volume preserving linear transformations.

For $f\in BV(\R^n)$, we define the (normalized) distance of $f$ from $\mM$ by
$$d_a(f,\mM) = \inf\lt\{\frac{\|f-g_{a,x_0,\psi,r}\|_{n'}^{n'}}{\|f\|_{n'}^{n'}}\, :\, g_{a,x_0,\psi,r} \in \mM,\, \|f\|_{n'} = \|g_{a,x_0,\psi,r}\|_{n'}\rt\},$$
if $f\not= 0$ and $\lam (0) = 0$.

The second main result of this paper is a stability estimate for \eqref{eq:ZhangWang}.
\begin{theorem}\label{maintheo1}
For $n\geq 2$, there exists a positive constant $\al(n)$ depending only on $n$ such that
\begin{equation}\label{eq:Stab}
d_a(f,\mM) \leq \al(n) \de_a(f)^{1/cn},
\end{equation}
for any $f\in BV(\R^n)$, with $c = 1680$.
\end{theorem}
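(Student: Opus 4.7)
The plan is to transfer the stability question to its Euclidean counterpart by exploiting the affine invariance of the deficit. Both $\mE_1$ and the $L^{n'}$ norm are invariant under the $SL_n$-action $f \mapsto f\circ\phi^{-1}$, so $\de_a(f\circ\phi^{-1}) = \de_a(f)$; and the class $\mM$ is also $SL_n$-invariant, so $d_a(f,\mM) = d_a(f\circ\phi^{-1},\mM)$. I am therefore free to replace $f$ by any well-chosen affine image of itself without altering either side of \eqref{eq:Stab}, and throughout I may normalise $\|f\|_{n'}=1$.

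The key step is an affine-to-Euclidean reduction: I would produce $\phi_f \in SL_n$ and a dimensional constant $C(n)$ such that, writing $\t f := f\circ\phi_f^{-1}$,
\begin{equation*}
|D\t f|(\R^n) \le \bigl(1 + C(n)\,\de_a(f)\bigr)\,\mE_1(f).
\end{equation*}
This step relies on the new viewpoint on $\mE_1$ developed in the first half of the paper: together with a stability version of the $L_1$ Busemann-Petty centroid inequality (such as B\"or\"oczky's), the $L^1$ affine energy can be identified, up to a dimensional constant, with an infimum over $SL_n$ of the Euclidean total variation of the $SL_n$-transform of $f$, and near-minimality in the centroid inequality forces the optimising convex body to be nearly a ball. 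Quantifying this produces the required $\phi_f$.

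Next I would apply the known quantitative $L^1$-Sobolev inequality on $BV(\R^n)$ (a BV form of the Fusco-Maggi-Pratelli / Figalli-Maggi-Pratelli stability of the Euclidean isoperimetric inequality): there exists $K_n > 0$ such that for every nonzero $g \in BV(\R^n)$,
\begin{equation*}
\inf \frac{\|g - a\chi_{x_0+rB_2^n}\|_{n'}^{n'}}{\|g\|_{n'}^{n'}} \le K_n\lt(\frac{|Dg|(\R^n)}{n\om_n^{1/n}\|g\|_{n'}} - 1\rt)^{1/(cn)},
\end{equation*}
where the infimum runs over $a\ne 0$, $x_0 \in \R^n$ and $r > 0$ with $\|a\chi_{x_0+rB_2^n}\|_{n'} = \|g\|_{n'}$, and $c = 1680$. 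Plugging in $g = \t f$ and using the previous step to bound $|D\t f|(\R^n)/(n\om_n^{1/n}\|\t f\|_{n'}) - 1 \le C'(n)\de_a(f)$ turns the right-hand side into $C''(n)\de_a(f)^{1/(cn)}$. Finally I would pull back by $\phi_f$: the $L^{n'}$ norm is preserved, and each Euclidean candidate $a\chi_{x_0+rB_2^n}$ corresponds under precomposition with $\phi_f^{-1}$ to the characteristic function of a dilated ellipsoid, i.e.\ to an element of $\mM$ with matching $L^{n'}$ norm. This transfers the Euclidean distance bound for $\t f$ into the affine distance bound for $f$, yielding \eqref{eq:Stab} with $\al(n) = C''(n)$.

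The main obstacle will be the affine-to-Euclidean reduction in step one. It is crucial that the multiplicative loss between $|D\t f|(\R^n)$ and $\mE_1(f)$ be controlled by the deficit itself; a merely dimensional bound would not propagate a factor of $\de_a(f)$ to the right-hand side after Euclidean stability is applied, and one would lose the whole estimate. Making this quantitative requires combining the new variational description of $\mE_1$ with a stability result for the $L_1$ Busemann-Petty centroid inequality, and carefully tracking exponents through the composition of estimates so that the final rate matches $1/(cn)$ with $c = 1680$.
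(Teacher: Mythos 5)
Your high-level plan---exploit the affine invariance of $\de_a$ to push $f$ into a ``Euclidean-normalised'' position $\t f = f\circ\phi_f^{-1}$ and then apply the isotropic BV-Sobolev stability---is a reasonable alternative route, but the central step as you state it does not hold, and two of your quantitative ingredients are off. The claimed reduction
$|D\t f|(\R^n)\le\bigl(1+C(n)\,\de_a(f)\bigr)\mE_1(f)$,
with \emph{linear} dependence on the deficit, is far too strong. The only control available on the shape of the body $B_1(f)$ comes from a stability version of the Busemann--Petty centroid (equivalently, Petty projection) inequality, and this---B\"or\"oczky's estimate, which is Proposition~\ref{Stabcentroid} in the paper---only bounds the Banach--Mazur distance $\de_{BM}(B_1(f),B_2^n)$ by a power $\de_a(f)^{1/(cn)}$ of the deficit, nothing close to $\de_a(f)$ itself. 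After choosing the optimal $\phi_f\in SL_n$, the ratio $|D\t f|(\R^n)/\mE_1(\t f)$ equals $1$ only if $\phi_f(B_1(f))$ is a ball, and a first-variation computation shows the gap is at best $O\bigl(\de_{BM}(B_1(\t f),B_2^n)^2\bigr)\lesssim\de_a(f)^{2/(cn)}$---a nontrivial second-order Taylor estimate for the functional $\rho\mapsto\bigl(\int\rho^{-1}\bigr)\bigl(\int\rho^n\bigr)^{1/n}$ that you do not carry out---but never $O(\de_a(f))$. Separately, you mis-state the exponent in the Euclidean BV Sobolev stability: the Fusco--Maggi--Pratelli / Figalli--Maggi--Pratelli theorem gives $1/2$ on the deficit, not $1/(cn)$.

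The paper avoids composing the two stability results at all. It uses Lemma~\ref{case1} to split the deficit \emph{exactly} as a sum of two nonnegative terms (the first display in the proof of Theorem~\ref{maintheo1}): one piece records the loss in the centroid inequality for $B_1(f)$, the other the loss in the \emph{anisotropic} Sobolev inequality of \cite{FMP13} taken with respect to the body $K=\bigl(\om_n/V(K_1(f))\bigr)^{1/n}K_1(f)$ (not the ball). Since each piece is $\le\de_a(f)$, the centroid stability (exponent $1/(cn)$) and the anisotropic Sobolev stability (exponent $1/2$) are applied in parallel and then combined by the triangle inequality, giving $\de_a(f)^{1/2}+\de_a(f)^{1/(cn)}\lesssim\de_a(f)^{1/(cn)}$. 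This is why the anisotropic version of \cite{FMP13} is essential and the isotropic one does not suffice as you use it. Your route could be repaired to reach the same exponent $1/(cn)$---replace the linear bound by $1+C(n)\de_a(f)^{2/(cn)}$ proved by the second-order perturbation argument, and use the correct exponent $1/2$ in FMP, since $(2/(cn))\cdot(1/2)=1/(cn)$---but as written your two errors (the overly strong reduction and the overly weak Euclidean exponent) cancel only by accident, and neither intermediate claim is correct.
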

Theorem \ref{maintheo1} is proved by exploiting the recently established quantitative anisotropic Sobolev inequality of Figalli, Maggi and Pratelli \cite{FMP13} and a stability estimate for the Busemann--Petty centroid inequality which will be presented in Section \S5 below. The latter inequality is derived from a stability estimate for the $L_p$ Petty projection inequality due to B\"or\"oczky \cite{KB13}, the class reduction technique introduce by Lutwak \cite{Lut90}, and an improved dual mixed volume inequality proved in Section \S2. We should emphasize here that the order of $\de_a(f)$ in \eqref{eq:Stab} is not optimal. We believe that its sharp value should be $1/2$ as in the result of Figalli, Maggi and Pratelli \cite{FMP13} for the anisotropic Sobolev inequality which strengthened the previous results in \cite{Ci06,FMP07}. The study of stability estimates for inequalities in analysis and geometry recently has attracted lots of attention by many mathematicians and has become an important field in mathematical research. We refer the reader to \cite{BB10,BB11,KB10,KB13,Ci06,CEFT08,CFMP09,ER09,FiMP09,FMP10, FMP13,FJ14,FMP07,FMP08} and references therein for more background and results in this direction.

The rest of this paper is organized as follows. In the next section we recall some background material from the $L_p$ Brunn-Minkowski theory of convex bodies. In Section \S3, we prove Theorem \ref{maintheorem}. Section \S4 is devoted to reproving several sharp affine Sobolev-type inequalities and obtain their equality conditions as corollaries of Theorem \ref{maintheorem}. In the last Section \S5, we give a proof of the stability estimate for the affine Sobolev inequality on $BV(\R^n)$ (that is, Theorem \ref{maintheo1}).


\section{Background Material}
For quick later reference we recall in this section some background material from the $L_p$ Brunn-Minkowski theory of convex bodies. This theory has its origins in the work of Firey \cite{Firey62} and was further developed by Lutwak and many authors (see \cite{CG02,HS09,HS09b,HLYZ05,Lut93,Lut96,LYZ00,LYZ04,LYZ06,TW15}). We also list some basic facts from real analysis which we need in our proofs of Theorem \ref{maintheorem} and Theorem \ref{maintheo1}.

A convex body is a compact convex subset of $\R^n$ with nonempty interior. We denote by $\mK^n$ the set of convex bodies in $\R^n$ endowed with the Hausdorff metric, by $\mK_0^n$ the set of convex bodies containing the origin in their interiors, and by $GL_n$ the set of invertible linear transformations of $\R^n$. It is well known that each convex body $K\in \mK^n$ is uniquely determined by its support function defined by
$$h(K,x) = \sup\{\la x,y\ra\, :\, y\in K\},\quad x\in \R^n.$$
Note that $h(K,\cdot)$ is positively homogeneous of degree one and subadditive. Conversely, every function with these properties is the support function of a unique compact convex set.

If $K\in \mK_0^n$, then the polar body $K^*$ of $K$ is defined by
$$K^* =\{x\in \R^n\, :\, \la x,y\ra \leq 1\, \text{ for all }\, y\in K\}.$$
From the polar formula for volume, it follows that the $n-$dimensional Lebesgue measure $V(K^*)$ of the polar body $K^*$ is given by
\begin{equation}\label{eq:polarvolume}
V(K^*) = \frac1n \int_{S^{n-1}} h(K,u)^{-n} du,
\end{equation}
where the integration is with respect to spherical Lebesgue measure. We refer the readers to the book \cite{RS} for more background from the theory of convex bodies.

For real $p\geq 1$ and $\al, \be > 0$, the $L_p$ Minkowski-Firey combination of $K,L \in \mK_0^n$ is the convex body $\al \cdot K +_p \be\cdot L$ whose support function is defined by (see \cite{Firey62})
$$h(\al \cdot K +_p \be\cdot L,\cdot)^p = \al h(K,\cdot)^p + \be h(L,\cdot)^p.$$
The $L_p$ mixed volume $V_p(K,L)$ of $K,L\in \mK_0^n$ was defined in \cite{Lut93} by
$$V_p(K,L) = \frac pn \lim_{\ep\to 0^+} \frac{V(K +_p \ep\cdot L) - V(K)}{\ep}.$$
It is clear that $V_p(K,K) = V(K)$ for every $K\in \mK_0^n$. It was also shown in \cite{Lut93} that for all convex bodies $K,L\in \mK_0^n$,
\begin{equation}\label{eq:Lpmixed}
V_p(K,L) = \frac1n \int_{S^{n-1}} h(L,u)^p h(K,u)^{1-p} dS(K,u),
\end{equation}
where the measure $S(K,\cdot)$ on $S^{n-1}$ is the classical surface area measure of $K$. Recall that for a Borel set $\om \subset S^{n-1}$, $S(K,\om)$ is the $(n-1)-$dimensional Hausdorff measure of the set of all boundary points of $K$ for which there exists a normal vector of $K$ belonging to $\om$.

A compact subset $K$ of $\R^n$ is called star-shaped (about the origin) if for any $x\in K$, the interval $\{tx\,:\, t\in [0,1]\}$ is contained in $K$. In this case, the radial function of $K$, $\rho(K,\cdot): \R^n\setminus\{0\}\to [0,\infty)$, is defined by
$$\rho(K,x) = \max\{\lam \geq 0\, :\, \lam x \in K\},\quad x\not=0.$$
If $\rho(K,\cdot)$ is positive and continuous, we call $K$ a star body (about the origin). Two star bodies $K$ and $L$ are said to be dilates if $\rho(K,u)/\rho(L,u)$ is independent of $u\in S^{n-1}$.

If $K\in \mK_0^n$ then it is easy to prove that
$$h(K^*,\cdot) = 1/\rho(K,\cdot),\quad\text{and}\quad \rho(K^*,\cdot) = 1/h(K,\cdot).$$
For star bodies $K,L$ and $\ep > 0$, the $L_p$ harmonic radial combination $K \t{+}_p \ep\cdot L$ is the star body defined by
$$\rho(K \t{+}_p\, \ep\cdot L,\cdot)^{-p} = \rho(K,\cdot)^{-p} +\ep  \rho(L,\cdot)^{-p}.$$
The dual mixed volume $\t{V}_{-p}(K,L)$ of the star bodies $K,L$ can be defined by
$$\t{V}_{-p}(K,L) = -\frac{p}n \lim_{\ep\to 0} \frac{V(K \t{+}_p\, \ep\cdot L) -V(K)}{\ep}.$$
We have the following integral representation for the dual mixed volume $\t{V}_{-p}(K,L)$ of the star bodies $K,L$ (see \cite{LYZ00}),
\begin{equation}\label{eq:dualmixed}
\t{V}_{-p}(K,L) = \frac1n \int_{S^{n-1}} \rho(K,u)^{n+p} \rho(L,u)^{-p} du.
\end{equation}
Note that $\t{V}_{-p}(K,K) = V(K)$ for each star body $K$. By the H\"older inequality, we have
\begin{equation}\label{eq:dualineq}
\t{V}_{-p}(K,L) \geq V(K)^{(n+p)/n} V(L)^{-p/n},
\end{equation}
with equality if and only if $K, L$ are dilates.

An improved version of \eqref{eq:dualineq} which may be of independent interest is given in the next proposition and is used in the proof of Theorem \ref{maintheo1}. 
\begin{proposition}\label{improved}
If $K$ and $L$ are star bodies in $\R^n$, then
\begin{equation}\label{eq:improved}
\frac{\t{V}_{-p}(K,L)}{V(K)^{(n+p)/n} V(L)^{-p/n}} - 1 \geq \frac{p}{8n} \lt(\frac{V(K\De(\ga L))}{V(K)}\rt)^2, \quad \ga = \lt(\frac{V(K)}{V(L)}\rt)^{1/n}.
\end{equation}
\end{proposition}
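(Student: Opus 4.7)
The first step is to reduce to the normalized case $V(K) = V(L)$ by replacing $L$ with $L' := \gamma L$. Since $\t{V}_{-p}(K, tL) = t^{-p}\t{V}_{-p}(K, L)$ by \eqref{eq:dualmixed} and $V(tL) = t^n V(L)$, a short calculation shows that the quotient $\t{V}_{-p}(K, L)/[V(K)^{(n+p)/n} V(L)^{-p/n}]$ is equal to $\t{V}_{-p}(K, L')/V(K)$, while the right-hand side of \eqref{eq:improved} already depends only on $L'$. Thus I may assume $\gamma = 1$ and $V(K) = V(L')$.

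Next, I would rewrite both sides as integrals against a common probability measure. Put $d\mu(u) = \rho(K, u)^n/(n V(K))\, du$ on $S^{n-1}$ and $h(u) = \rho(L', u)^n/\rho(K, u)^n$. Then $\int h\, d\mu = 1$ (equivalent to $V(K) = V(L')$), and by \eqref{eq:dualmixed},
$$\frac{\t{V}_{-p}(K, L')}{V(K)} = \int h^{-q} d\mu, \qquad q := \frac{p}{n}.$$
Splitting the polar integrals of $V(K\setminus L')$ and $V(L'\setminus K)$ according to the sign of $\rho(K) - \rho(L')$ yields
$$\frac{V(K\Delta L')}{V(K)} = \int |h - 1|\, d\mu.$$
So \eqref{eq:improved} reduces to the Pinsker-type scalar inequality
$$\int h^{-q} d\mu - 1 \geq \frac{q}{8}\lt(\int |h - 1|\, d\mu\rt)^2$$
for any positive $h$ on a probability space with $\int h\, d\mu = 1$.

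For the scalar inequality, set $\psi(x) = x^{-q} - 1 + q(x-1)$. Convexity of $x \mapsto x^{-q}$ gives $\psi \geq 0$, and since $\int (h-1)\, d\mu = 0$, $\int \psi(h)\, d\mu = \int h^{-q} d\mu - 1$. On $(0, 1]$, $\psi''(x) = q(q+1)\, x^{-q-2} \geq q$, so Taylor's theorem yields $\psi(x) \geq (q/2)(1-x)^2$ there. Restricting to $\{h \leq 1\}$ and applying Cauchy--Schwarz (using $\mu(\{h \leq 1\}) \leq 1$),
$$\int \psi(h)\, d\mu \geq \frac{q}{2}\int_{\{h \leq 1\}} (1-h)^2 d\mu \geq \frac{q}{2} A^2, \qquad A := \int (1-h)_+\, d\mu.$$
Because the constraint $\int h\, d\mu = 1$ forces $\int (h-1)_+\, d\mu = A$ as well, one has $\int |h - 1|\, d\mu = 2A$, and the desired factor $q/8 = p/(8n)$ emerges as $(q/2)\cdot(1/4)$.

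The step I expect to be most delicate is isolating the correct scalar inequality with the sharp constant $p/(8n)$. Since $\psi''(x) = q(q+1)\, x^{-q-2}$ decays as $x \to \infty$, no quadratic Taylor bound for $\psi$ holds uniformly on $(0, \infty)$; the trick is to use the mass-balance identity $\int (1-h)_+ d\mu = \int (h-1)_+ d\mu$ to transfer the entire $L^1$ deviation onto the subregion $\{h \leq 1\}$, where the quadratic bound is available and can be combined with Cauchy--Schwarz.
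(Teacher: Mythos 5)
Your proof is correct. It follows the same high-level strategy as the paper: normalize so that $\gamma = 1$, rewrite the dual mixed volume as an integral against a probability measure on $S^{n-1}$, reduce to a scalar Pinsker-type inequality, and convert the $L^1$-deviation of radial powers into $V(K\Delta L')/V(K)$. The difference lies in how the scalar inequality is obtained: the paper quotes Theorem~1.3 of \cite{VHN15a} as a black box and then performs the $|\rho(K)^n-\rho(L)^n|\mapsto V(K\Delta L)$ conversion, whereas you prove the scalar inequality from scratch. Your proof is therefore self-contained. Two points of craft are worth noting. First, your choice of measure $d\mu\propto\rho(K,u)^n\,du$ with integrand $h^{-q}$, $q=p/n$, rather than the paper's implicit $d\nu\propto\rho(L,u)^n\,du$ with integrand $f^{1+q}$, is the cleaner one: $\psi''(x)=q(q+1)x^{-q-2}\ge q$ holds on $(0,1]$ for \emph{every} $q>0$, so the quadratic Taylor bound is available uniformly, which would not be the case for $x^{1+q}$ across the full range of exponents. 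Second, the mass-balance trick $\int(h-1)_+\,d\mu=\int(1-h)_+\,d\mu=A$ that transfers the entire $L^1$-deviation onto $\{h\le 1\}$, followed by Cauchy--Schwarz using $\mu(\{h\le 1\})\le 1$, is exactly what yields the factor $q/8$, and you identify this as the delicate step correctly. A side remark: the paper's displayed chain has a cosmetic slip (the coefficient changes from $p/(2n)$ to $p/(8n)$ between two lines with identical integrands, connected by an $=$ sign); your argument shows transparently where the $1/4$ actually enters, via $\int|h-1|\,d\mu=2A$.
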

\begin{proof}
We can assume, without loss of generality, that $V(K) = V(L) = 1$ by the homogeneity of the functionals in \eqref{eq:improved}, hence
$$\int_{S^{n-1}} \rho(L,u)^n \frac{du}n =1.$$
According to Theorem $1.3$ in \cite{VHN15a}, we have
\begin{align*}
\t{V}_{-p}(K,L) & = \int_{S^{n-1}} \lt(\frac{\rho(K,u)^n}{\rho(L,u)^n}\rt)^{(n+p)/n} \rho(L,u)^n \frac{du}n\\
&\geq 1 + \frac{p}{2n} \lt(\int_{S^{n-1}} |\rho(K,u)^n -\rho(L,u)^n| \frac{du}n\rt)^2\\
&= 1 + \frac p{8n} \lt(\int_{S^{n-1}} (\max\{\rho(K,u)^n ,\rho(L,u)^n\} - \min\{\rho(K,u)^n ,\rho(L,u)^n\})\frac{du}n\rt)^2\\
&= 1 + \frac p{8n} \lt(\int_{S^{n-1}} (\rho(K\cup L, u)^n - \rho(K\cap L,u)^n) \frac{du}n\rt)^2\\
&= 1 + \frac{p}{8n} \lt(V(K\cup L) - V(K\cap L)\rt)^2\\
&= 1 + \frac p{8n} V(K \De L)^2,
\end{align*}
which is exactly \eqref{eq:improved}.
\end{proof}

For $K\in \mK_0^n$, the asymmetric $L_p$ centroid body of $K$ is a natural notion from the theory of $L_p$ Minkowski valuations as was first discovered by Ludwig \cite{Ludwig} (see also \cite{Para1,Para2}). It is defined by
\begin{equation}\label{eq:asymcentroid}
h(\Gam_p^+K,u)^p = \frac1{\al_{n,p} V(K)} \int_K \la u,y\ra_+^p dy,
\end{equation}
where $\al_{n,p} = \om_{n+p-2}/((n+p)\om_n \om_{p-1})$ is a normalizing constant such that $\Gam_{p}^+B_2^n = B_2^n$. For each $\lam\in [0,1]$, the general $L_p$ centroid body of $K\in \mK_0^n$ is the convex body
\begin{equation}\label{eq:general}
\Gam_{\lam,p}K = (1-\lam)\cdot \Gam_p^+ K +_p \lam\cdot \Gam_p^-K,
\end{equation}
where $\Gam_p^-K = \Gam_p^+(-K)$, thus we have
\begin{equation}\label{eq:generalsupport}
h(\Gam_{\lam,p}K,u)^p = \frac1{\al_{n,p} V(K)} \int_K ((1-\lam)\la u,y\ra_+^p + \lam \la u,y\ra_-^p) dy.
\end{equation}
Note that in the symmetric case $\lam = 1/2$, we recover the $L_p$ centroid body $\Gam_pK$ introduced by Lutwak et al. \cite{LYZ00}.

The following general affine $L_p$ Busemann-Petty centroid inequality established in \cite{HS09} plays the crucial role in our proof of the general affine P\'olya-Szeg\"o principle.
\begin{theorem}\label{HS09}
If $p\geq 1$ and $K\in \mK_0^n$, then 
\begin{equation}\label{eq:HS09}
V(\Gam_{\lam,p}K) \geq V(K),
\end{equation}
with equality if and only if $K$ is an origin-centered ellipsoid.
\end{theorem}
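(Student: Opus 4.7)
The plan is to follow the Steiner symmetrization strategy via shadow systems, as pioneered by Campi--Gronchi for the symmetric $L_p$ Busemann--Petty centroid inequality and adapted to the asymmetric/general setting. First I would verify the $SL_n$-covariance $\Gam_{\lam,p}(\phi K) = \phi\, \Gam_{\lam,p}K$ for every $\phi\in SL_n$; this is immediate from \eqref{eq:generalsupport} via the change of variables $y\mapsto \phi^{-1}y$, and it shows that the ratio $V(\Gam_{\lam,p}K)/V(K)$ depends only on the $SL_n$-orbit of $K$. Combined with the polar volume formula \eqref{eq:polarvolume}, the quantity to control is $V((\Gam_{\lam,p}K)^*) = \frac{1}{n}\int_{S^{n-1}} h(\Gam_{\lam,p}K, u)^{-n}\,du$.

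For an arbitrary direction $v_0\in S^{n-1}$, I would build the shadow system $\{K_t\}_{t\in[-1,1]}$ of $K$ along $v_0$ with $K_1 = K$, $K_{-1} = -K$, and $K_0$ the Steiner symmetral $S_{v_0}K$; writing $K$ as the region between an upper concave graph $s = f^+(y)$ and a lower convex graph $s = -f^-(y)$ over the projection onto $v_0^\perp$, this shadow system amounts to translating vertical slices while preserving their length, so $V(K_t) = V(K)$ for all $t$. The central analytic step is the \emph{convexity lemma}: for each $u\in S^{n-1}$,
\begin{equation*}
t \mapsto V(K_t)\, h(\Gam_{\lam,p}K_t, u)^p = \frac{1}{\al_{n,p}}\int_{K_t}\lt[(1-\lam)\la u,y\ra_+^p + \lam\la u,y\ra_-^p\rt] dy
\end{equation*}
is convex in $t$; after the volume-preserving change of variables implicit in the shadow system, the integrand becomes $(1-\lam)(\la u,y\ra + t\gamma_u(y))_+^p + \lam(\la u,y\ra + t\gamma_u(y))_-^p$ for an explicit function $\gamma_u$, and each piece is convex in $t$ since both $s\mapsto s_+^p$ and $s\mapsto s_-^p$ are convex for $p\geq 1$.

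Combining this pointwise convexity with a reverse Minkowski-type inequality for $L^{-n}$-integrals over $S^{n-1}$ (as in Campi--Gronchi), the function $t \mapsto V((\Gam_{\lam,p}K_t)^*)$ is concave on $[-1,1]$. Concavity at $t = 0$ gives $V((\Gam_{\lam,p}S_{v_0}K)^*) \geq \tfrac12[V((\Gam_{\lam,p}K)^*) + V((\Gam_{1-\lam,p}K)^*)]$, since $\Gam_{\lam,p}(-K)=-\Gam_{1-\lam,p}K$. Iterating Steiner symmetrization in a dense sequence of directions, the iterates converge in Hausdorff distance to the Euclidean ball $rB_2^n$ of the same volume and produce a body symmetric under $y\mapsto -y$ (so the $\lam$ and $1-\lam$ terms coincide); invoking the normalization $\Gam_{\lam,p}B_2^n = B_2^n$ then yields $V(\Gam_{\lam,p}K) \geq V(\Gam_{\lam,p}(rB_2^n)) = V(rB_2^n) = V(K)$. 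For the equality case, equality in \eqref{eq:HS09} forces saturation of the convexity lemma along every shadow system, which after $SL_n$-normalization characterizes $K$ as an origin-centered ellipsoid.

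The main obstacle will be extracting concavity of the integrated polar-volume $V((\Gam_{\lam,p}K_t)^*)$ from the pointwise convexity of $h(\Gam_{\lam,p}K_t, u)^p$: this is nontrivial because $s\mapsto s^{-n/p}$ is convex decreasing, so its composition with a convex function does not in general yield concavity. The resolution relies on a careful Jensen-type argument exploiting the specific affine-in-$t$ structure of $\gamma_u$ in the shadow system, following the Campi--Gronchi method adapted to the asymmetric integrand arising from $\Gam_{\lam,p}$.
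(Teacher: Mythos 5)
The paper does not prove Theorem \ref{HS09}; it imports it wholesale from Haberl--Schuster \cite{HS09}, adding only the remark that the same proof covers $n=2$. So the comparison must be with the argument in \cite{HS09}, itself built on Campi--Gronchi \cite{CG02}, and your high-level plan---Steiner symmetrization implemented through a shadow system along a direction $v_0$, a pointwise convexity lemma for $t\mapsto V(K_t)\,h(\Gam_{\lam,p}K_t,u)^p$, then iteration to the ball---is aimed at the right family of ideas, and the convexity lemma you state is correct as written.

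The argument nevertheless has a genuine gap at its center and some incidental errors. First, you pass from pointwise convexity of $h(\Gam_{\lam,p}K_t,u)^p$ to concavity of $t\mapsto V\bigl((\Gam_{\lam,p}K_t)^*\bigr)$, and you yourself flag this as ``nontrivial''; it is worse than nontrivial, because the shadow-system lemma that handles exactly this (convexity of $t\mapsto V(L_t^*)^{-1}$ along a shadow system) requires the bodies $L_t$ to be origin-symmetric, a property $\Gam_{\lam,p}K_t$ enjoys only when $\lam=1/2$. Removing that hypothesis is precisely the new content in \cite{HS09}, so it cannot be waved through with a generic ``Jensen-type argument.'' Second, even granting the concavity, you would have proved a statement about $V\bigl((\Gam_{\lam,p}K)^*\bigr)$ while \eqref{eq:HS09} is about $V(\Gam_{\lam,p}K)$: iteration to the ball would yield $V\bigl((\Gam_{\lam,p}K)^*\bigr)\leq\om_n^2/V(K)$, and Blaschke--Santal\'o runs the wrong way to convert this into $V(\Gam_{\lam,p}K)\geq V(K)$, so the final step as written is a non sequitur. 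Finally, two identities are off: in the shadow system one has $K_{-1}=\sigma_{v_0}K$, the reflection across $v_0^\perp$, not $-K$; and $\Gam_{\lam,p}(-K)\neq -\Gam_{1-\lam,p}K$, since \eqref{eq:generalsupport} and $GL_n$-covariance give $\Gam_{\lam,p}(-K)=\Gam_{1-\lam,p}K=-\Gam_{\lam,p}K$, so your formula would force $\Gam_{\lam,p}(-K)=\Gam_{\lam,p}K$, which is false for general $K$.
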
  
Although this inequality was formulated in \cite{HS09} for dimensions $n\geq 3$ and $p> 1$, we remark that it also holds true in dimension $n=2$ with the same proof as the one in  \cite{HS09}. It was also shown in \cite{HS09} that inequality \eqref{eq:HS09}, for $p > 1$, strengthens and directly implies the affine $L_p$ Busemann-Petty centroid inequality established by Lutwak et al. in \cite{LYZ00}, namely if $K\in \mK_0^n$, then 
$$V(\Gam_{\lam,p}K) \leq V(\Gam_pK).$$

We turn now to the second tool needed in our proof of the general affine P\'olya-Szeg\"o principle. Convex symmetrization was introduced by Alvino, Ferone, Trombetti and Lions in \cite{AFTL97} and further developed in \cite{ET04,ER09,FV04}. Similar to the case of symmetric rearrangement, a P\'olya-Szeg\"o principle also holds true for convex symmetrization. Moreover, we have the following results from \cite{FV04}.

\begin{theorem}\label{Convexsymme}
Let $K$ be a convex body in $\mK_0^n$ and let $p \in (1,\infty)$. For any function $f \in W^{1,p}(\R^n)$, we have 
\begin{equation}\label{eq:Convexsymme}
\int_{\R^n} h(K, -\na f(x))^p dx \geq \int_{\R^n} h(K, -\na f^K(x))^p dx.
\end{equation}
Moreover, if $f$ is a nonnegative function in $W^{1,p}(\R^n)$ such that 
\begin{equation}\label{eq:Condconvex}
V(\{x\,:\, |\na f^K(x)| =0\}\cap\{x\, :\, 0 < f^K(x) < {\rm ess}\sup f\}) =0,
\end{equation}
then equality holds in \eqref{eq:Convexsymme} if and only if there exists $x_0\in \R^n$ such that $f(x) = f^K(x+x_0)$ a.e. on $\R^n$. 
\end{theorem}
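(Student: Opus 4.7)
The plan is to follow the standard rearrangement-through-coarea scheme, but with the Euclidean isoperimetric inequality replaced by the anisotropic Wulff inequality tailored to the support function $h(K,\cdot)$. The crucial pointwise identity is that on a regular level set $\{f=t\}$ the outward unit normal to $\{f>t\}$ is $\nu = -\na f/|\na f|$, so by the $1$-homogeneity of $h(K,\cdot)$ one has $h(K,-\na f) = |\na f|\,h(K,\nu)$.

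First, I would apply the coarea formula to write
\begin{equation*}
\int_{\R^n} h(K,-\na f)^p\,dx = \int_0^\infty \int_{\{f=t\}} \frac{h(K,-\na f)^p}{|\na f|}\,d\mathcal{H}^{n-1}\,dt,
\end{equation*}
and, recalling that $-\mu_f'(t) = \int_{\{f=t\}} |\na f|^{-1}\,d\mathcal{H}^{n-1}$ for a.e.\ $t$, apply H\"older's inequality with exponents $p$ and $p/(p-1)$ against the measure $|\na f|^{-1}d\mathcal{H}^{n-1}$ to obtain the lower bound
\begin{equation*}
\int_{\{f=t\}} \frac{h(K,-\na f)^p}{|\na f|}\,d\mathcal{H}^{n-1} \ge \frac{\lt(\int_{\{f=t\}} h(K,\nu)\,d\mathcal{H}^{n-1}\rt)^p}{(-\mu_f'(t))^{p-1}}.
\end{equation*}
Next, I would invoke the Wulff inequality for the anisotropic perimeter $P_K(E) := \int_{\partial^* E} h(K,\nu_E)\,d\mathcal{H}^{n-1}$, which asserts $P_K(E)\ge n V(K)^{1/n} V(E)^{(n-1)/n}$ for every set $E$ of finite perimeter, with equality iff $E$ is a.e.\ a homothet of $K$. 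Applied to $E=\{f>t\}$, this yields a pointwise-in-$t$ lower bound depending only on $\mu_f$.

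The same chain becomes an identity when $f$ is replaced by $f^K$: by construction the super-level sets $\{f^K>t\}$ are concentric dilates of $\tilde K$, so the Wulff step is saturated, and a direct computation using $\|x\|_{\tilde K} = h(\tilde K^*,x)$ shows that the H\"older step is also saturated for each $t$. Since $\mu_{f^K}=\mu_f$, integrating the pointwise bound over $t$ gives precisely \eqref{eq:Convexsymme}.

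For the equality case, hypothesis \eqref{eq:Condconvex} is equivalent to absolute continuity of $\mu_f$ with $-\mu_f'>0$ a.e.\ on $(0,\mathrm{ess\,sup}\,f)$. Equality in \eqref{eq:Convexsymme} then propagates to equality for a.e.\ $t$ in both the H\"older step (forcing $|\na f|$ to be constant on each level set $\{f=t\}$) and the Wulff step (forcing $\{f>t\} = x(t) + r(t)\tilde K$ up to a null set). The main obstacle will be showing that the translations $x(t)$ can be chosen independent of $t$: this is the nested-homothets argument, where the monotone inclusion $\{f>t_1\}\supset\{f>t_2\}$ for $t_1<t_2$ together with the homothety $r(t_2)\tilde K + x(t_2)\subset r(t_1)\tilde K+x(t_1)$ traps all centers at a single $x_0$ (the shared center forced by, e.g., letting $t\uparrow\mathrm{ess\,sup}\,f$ so that $r(t)\downarrow 0$). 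Once $x(t)\equiv x_0$, the family of level sets consists of concentric dilates of $\tilde K$, and the equimeasurability $\mu_f=\mu_{f^K}$ yields $f(x)=f^K(x+x_0)$ a.e.
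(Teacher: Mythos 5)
The paper does not prove this theorem itself; it is imported verbatim from the cited reference [FV04] (Ferone--Volpicelli), with the inequality part going back to [AFTL97]. So there is no internal proof to compare against, and your proposal is essentially a reconstruction of the argument in those references: coarea, H\"older, the Wulff (anisotropic isoperimetric) inequality, and, for the equality case, the nested-homothets argument pinning down a common center. That is indeed the right route, and the identity $h(K,-\nabla f^{K})$ being constant on level sets (because $h(K,\nabla\|x\|_{\tilde K}) = (V(K)/\omega_n)^{1/n}$, independent of $x$) is precisely what makes both the H\"older and Wulff steps saturate for $f^{K}$.

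Two remarks on precision. First, in the equality analysis, saturation of H\"older at level $t$ forces $h(K,-\nabla f)^{p}$ to be $\mathcal H^{n-1}$-a.e.\ constant on $\{f=t\}$, \emph{not} $|\nabla f|$; since $h(K,-\nabla f)=|\nabla f|\,h(K,\nu)$ and $h(K,\nu)$ genuinely varies over $\partial(x(t)+r(t)\tilde K)$ unless $K$ is a ball, these are different conditions, and it is the former that, combined with the Wulff equality case, recovers $|\nabla f|=|\nabla f^{K}(\,\cdot-x(t))|$ on the level set. Second, your scheme tacitly takes $f\ge 0$: the coarea decomposition over $t>0$ and the identification $\nu=-\nabla f/|\nabla f|$ for the superlevel sets $\{f>t\}$ only capture $|f|$, whereas the left-hand side of \eqref{eq:Convexsymme} sees the sign of $f$ through the (generally asymmetric) support function $h(K,\cdot)$. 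The references state the result for nonnegative functions; if you want the inequality in the generality written here you would need to add a reduction to $f\ge0$ at the outset, or restrict the statement. Neither point affects the overall correctness of the strategy, which matches the cited proof.
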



\section{Proof of Theorem \ref{maintheorem}}
In this section, we give the proofs of Theorem \ref{maintheorem} and Theorem \ref{maintheo1}. Our proofs follow the ideas from the recent paper of Haddad, Jim\'enez and Montenegro \cite{HJM15} where these authors gave a new proof of several sharp affine Sobolev-type inequalities (such as Sobolev, Gagliardo-Nirenberg and logarithmic-Sobolev inequalities) based on the affine $L_p$ Busemann-Petty centroid inequality of Lutwak et al. \cite{LYZ00}, and the results of Cordero-Erausquin, Nazaret and Villani \cite{CNV} and of Gentil \cite{Gen}.

Let $f\in W^{1,p}(\R^n)$ be not $0$ a.e. on $\R^n$ and let $p >1$ be a real number. For each $x\in \R^n$ let us define
$$\|x\|_{p,\lam,f} = \lt(\int_{\sp} \lt((1-\lam) \la x,\na f(y)\ra_+^p + \lam \la x, \na f(y)\ra_-^p\rt) dy\rt)^{1/p},$$
and
$$\Blp =\{x\in \R^n\, :\, \|x\|_{p,\lam,f} \leq 1\}.$$
The following lemma is elementary.
\begin{lemma}\label{key1}
Let $f\in W^{1,p}(\R^n)$ be not $0$ a.e. on $\R^n$, then $\Blp\in \mK_0^n$.
\end{lemma}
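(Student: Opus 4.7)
The plan is to show that $\|\cdot\|_{p,\lam,f}$ is a continuous sublinear functional on $\R^n$ that is strictly positive away from the origin; from this the unit sublevel set $\Blp$ is automatically a convex body containing $0$ in its interior.

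First I would verify that $\|\cdot\|_{p,\lam,f}$ is positively $1$-homogeneous (immediate by pulling a positive scalar out of $\langle \cdot, \nabla f(y)\rangle_\pm$) and subadditive. The elementary pointwise inequalities $\langle x+y, v\rangle_\pm \leq \langle x, v\rangle_\pm + \langle y, v\rangle_\pm$ combined with Minkowski's inequality in $L^p(dy)$ give
\[
\left(\int_{\sp} \langle x+y, \nabla f\rangle_\pm^p\, dz\right)^{1/p} \leq \left(\int_{\sp} \langle x, \nabla f\rangle_\pm^p\, dz\right)^{1/p} + \left(\int_{\sp} \langle y, \nabla f\rangle_\pm^p\, dz\right)^{1/p}.
\]
Feeding these into the weighted norm $N(s,t)=((1-\lam) s^p + \lam t^p)^{1/p}$ on $\R^2$, which for $p\geq 1$ is monotone on the nonnegative orthant and satisfies the triangle inequality, yields $\|x+y\|_{p,\lam,f} \leq \|x\|_{p,\lam,f} + \|y\|_{p,\lam,f}$. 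The crude bound $\|x\|_{p,\lam,f} \leq |x|\,\|\nabla f\|_p$ makes $\|\cdot\|_{p,\lam,f}$ everywhere finite, and a finite-valued sublinear function on $\R^n$ is automatically continuous.

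The crux is the positive definiteness. Suppose $\|x\|_{p,\lam,f} = 0$ for some $x \neq 0$; unpacking the definition, the directional derivative $\partial_x f = \langle x, \nabla f\rangle$ has a prescribed sign almost everywhere (it is $\le 0$ if $\lam = 0$, $\ge 0$ if $\lam = 1$, and identically $0$ if $\lam \in (0,1)$). Restricting $f$ to a.e.\ line parallel to $x$ using Fubini together with the absolute continuity on lines of Sobolev functions, for a.e.\ $z\in x^\perp$ the slice $t\mapsto f(z+tx)$ is a monotone (possibly constant) function on $\R$ that also lies in $L^p(\R)$; the only such function is identically zero, so $f\equiv 0$ a.e.\ on $\R^n$, contradicting the hypothesis. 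This rigidity step is the single substantial obstacle in the plan, and is precisely where the assumption that $f$ is not zero a.e.\ enters.

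With these ingredients in hand, the conclusion is essentially formal. The set $\Blp$ is convex as a sublevel set of the convex function $\|\cdot\|_{p,\lam,f}$ and closed by continuity; moreover the continuous function $\|\cdot\|_{p,\lam,f}$ attains a strictly positive minimum $c$ and a finite maximum $C$ on $S^{n-1}$, and homogeneity upgrades this to $c|x| \leq \|x\|_{p,\lam,f} \leq C|x|$ on all of $\R^n$. Consequently $\Blp$ is sandwiched between the Euclidean balls of radii $1/C$ and $1/c$, which simultaneously gives boundedness, nonempty interior, and the fact that the origin is an interior point. Therefore $\Blp \in \mK_0^n$.
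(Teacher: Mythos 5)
Your proof is correct and follows essentially the same strategy as the paper: convexity and homogeneity give closedness and convexity of $\Blp$, the crude bound $\|x\|_{p,\lam,f}\le |x|\,\|\nabla f\|_p$ gives boundedness below of $\Blp$ (hence the origin interior), and the heart of the argument is showing strict positivity by a one-dimensional rigidity statement for monotone $L^p$ slices. The only deviation is technical: where the paper mollifies $f$ to $f_\delta = f\star\psi_\delta$ and argues with the smooth functions (deducing $f_\delta\equiv 0$ and then letting $\delta\to 0$), you invoke the ACL characterization of $W^{1,p}$ functions directly to restrict to a.e.\ line; both are standard and each buys roughly the same thing, with mollification avoiding a reference to the ACL theorem at the cost of a passage to the limit.
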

\begin{proof}
Since the function $\|\cdot\|_{p,\lam,f}$ is positively homogeneous of degree one and convex, the set $\Blp$ is a closed, convex subset of $\R^n$. Thus, to prove $\Blp\in \mK_0^n$, it is enough to show that there exist constants $C,c > 0$ such that
\begin{equation}\label{eq:Blp}
c \leq \|u\|_{p,\lam,f} \leq C,\quad \forall\, u\in S^{n-1}.
\end{equation}
It is easy to check that $\|x\|_{p,\lam,f} \leq \|\na f\|_p |x|$ for every $x\in \R^n$. Thus, we can choose $C = \|\na f\|_p$ in \eqref{eq:Blp}.
 
For the existence of $c >0$, it is enough to show that $\|u\|_{p,\lam,f} > 0$ for all $u\in S^{n-1}$ by the continuity of the function $\|\cdot\|_{p,\lam,f}$ on $S^{n-1}$. On the other hand, since $(D_uf)_- = (D_{-u} f)_+$, it suffices to prove that $\int_{\R^n} ((D_uf)_+(y))^p dy >0$ for any $u\in S^{n-1}$. We argue by contradiction. If there exists $u \in S^{n-1}$ such that
$$\int_{\R^n} ((D_uf)_+(y))^p dy =0,$$ 
then $(D_uf)_+(y) =0$ a.e. on $\R^n$, or equivalently $D_uf(y) \leq 0$ a.e. on $\R^n$. Let $\psi \in C_0^\infty(\R^n)$ such that $\psi \geq 0$ and $\int_{\R^n} \psi(y) dy = 1$. For each $\de > 0$, let us define $\psi_\de(x) = \de^{-n} \psi(x/\de)$ and $f_\de = f\star \psi_\de$. We then have $f_\de \in C^\infty(\R^n)$, $f_\de \to f$ in $L^p(\R^n)$ as $\de \to 0^+$ and $D_u f_\de(x)\leq 0$ for all $x\in \R^n$. For each fixed $y \in u^\bot$, we have $\frac{\pa}{\pa t} f_\de(y + tu) = D_uf_\de(y + tu) \leq 0$. Thus $f_\de(y+tu)$ is a decreasing function of $t$ . Since $f\in L^p(\R^n)$, for a.e $y\in u^\bot$, we have
$$\int_\R |f_\de(y+tu)|^p dt < \infty.$$
For such a $y\in u^\bot$, we must have $f_\de(y+t u) = 0$ for all $t\in \R$ by monotonicity. Consequently, $f_\de(x) = 0$ for all $x\in \R^n$. By letting $\de \to 0^+$, we obtain $f(x) = 0$ for a.e. on $\R^n$, which contradicts the assumption on $f$ and hence finishes the proof of this lemma.
\end{proof}

Since $\|\cdot\|_{p,\lam, f}$ is the Minkowski functional associated with $\Blp$, we have
$$\| \cdot\|_{p,\lam,f} = h((\Blp)^*, \cdot).$$ 
Combining \eqref{eq:polarvolume} and the definition of the general $L_p$ affine energy \eqref{eq:affineenergy} shows that
\begin{equation}\label{eq:affineenergy1}
\mE_{\lam,p}(f) = c_{n,p} (n V(\Blp))^{-1/n}.
\end{equation} 

We next define for $x\in \R^n$,
$$|||x|||_{p,\lam,f} = \lt(\int_{S^{n-1}} \|\xi\|_{p,\lam,f}^{-n-p} \lt((1-\lam) \la x,\xi\ra_+^p +\lam \la x,\xi\ra_-^p\rt)d\xi\rt)^{1/p}.$$
This is well defined by \eqref{eq:Blp}. Since the function $|||\cdot|||_{p,\lam,f}$ is positively homogeneous of degree one and convex, there exists a unique convex body $\Klp$ whose support function is $|||\cdot|||_{p,\lam,f}$. The next lemma gives us a useful relation between $\Klp$ and the general $L_p$ centroid body of $\Blp$. More precisely, we have the following.
\begin{lemma}\label{key2}
Let $f\in W^{1,p}(\R^n)$ be not $0$ a.e. on $\R^n$. Then 
\begin{equation}\label{eq:relationkey}
\Klp = \lt((n+p)\al_{n,p} V(B_{\lam,p}(f))\rt)^{1/p}  \Gam_{\lam,p}B_{\lam,p}(f).
\end{equation}
In particular, $\Klp \in \mK_0^n$.
\end{lemma}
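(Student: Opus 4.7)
The proof is essentially a direct computation matching two support functions, so the plan is to set up both sides carefully and verify they agree.

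First, I would rewrite the weight $\|\xi\|_{p,\lam,f}^{-n-p}$ appearing in the definition of $|||\cdot|||_{p,\lam,f}$ in terms of the radial function of $\Blp$. Since $\|\cdot\|_{p,\lam,f}$ is the Minkowski functional of $\Blp$, we have $\|\xi\|_{p,\lam,f} = 1/\rho(\Blp,\xi)$ for every $\xi \in S^{n-1}$, so
$$|||u|||_{p,\lam,f}^p = \int_{S^{n-1}} \rho(\Blp,\xi)^{n+p}\bigl((1-\lam)\la u,\xi\ra_+^p + \lam \la u,\xi\ra_-^p\bigr)\,d\xi.$$

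Next I would unfold the defining integral \eqref{eq:generalsupport} for $h(\Gam_{\lam,p}\Blp,u)^p$ in spherical coordinates. Writing $y = r\xi$ with $\xi \in S^{n-1}$ and $r \in [0,\rho(\Blp,\xi)]$ (Jacobian $r^{n-1}$), and using the homogeneity $\la u,y\ra_\pm^p = r^p \la u,\xi\ra_\pm^p$, the radial integration yields a factor $\rho(\Blp,\xi)^{n+p}/(n+p)$, so
$$h(\Gam_{\lam,p}\Blp,u)^p = \frac{1}{(n+p)\al_{n,p} V(\Blp)} \int_{S^{n-1}} \rho(\Blp,\xi)^{n+p}\bigl((1-\lam)\la u,\xi\ra_+^p + \lam \la u,\xi\ra_-^p\bigr)\,d\xi.$$

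Comparing the two displays gives $|||u|||_{p,\lam,f}^p = (n+p)\al_{n,p}V(\Blp)\, h(\Gam_{\lam,p}\Blp,u)^p$ for all $u$. Since $|||\cdot|||_{p,\lam,f}$ is by construction the support function of $\Klp$, and since support functions are positively $1$-homogeneous, taking $p$-th roots and using the fact that dilating a convex body dilates its support function by the same factor yields \eqref{eq:relationkey}. For the final claim, Lemma \ref{key1} gives $\Blp \in \mK_0^n$, whence $\Gam_{\lam,p}\Blp \in \mK_0^n$ by the general $L_p$ Busemann-Petty centroid inequality (Theorem \ref{HS09}), which in particular implies $V(\Gam_{\lam,p}\Blp) > 0$; thus its positive multiple $\Klp$ also belongs to $\mK_0^n$.

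There is no real obstacle here beyond bookkeeping: the only subtlety is to make sure the scaling constant $((n+p)\al_{n,p}V(\Blp))^{1/p}$ comes out correctly, which hinges on the radial-integration step producing the factor $1/(n+p)$ and on the normalizing constant $\al_{n,p}$ in the definition \eqref{eq:asymcentroid}-\eqref{eq:generalsupport}.
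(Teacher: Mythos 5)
Your computation is correct and is essentially the paper's proof run in the opposite direction: the paper writes $\|\xi\|_{p,\lam,f}^{-n-p}$ as $(n+p)\int_0^{\|\xi\|_{p,\lam,f}^{-1}}r^{n+p-1}\,dr$ and converts $h(\Klp,u)^p$ into a solid integral over $\Blp$, whereas you pass to spherical coordinates in the integral defining $h(\Gam_{\lam,p}\Blp,u)^p$ and arrive at the same identity. One small remark: for the final assertion $\Klp\in\mK_0^n$, invoking Theorem~\ref{HS09} is a bit of a detour (and slightly off, since the Busemann--Petty inequality controls volume, not membership in $\mK_0^n$); the cleaner observation is that $\Gam_{\lam,p}$ maps $\mK_0^n$ into $\mK_0^n$ directly from the definition \eqref{eq:generalsupport}, because $\Blp$ contains a ball about the origin, which forces $h(\Gam_{\lam,p}\Blp,u)>0$ for every $u\neq 0$.
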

\begin{proof}
Indeed, using polar coordinates, we have for any $u \in S^{n-1}$,
\begin{align*}
h(K_{\lam,p}(f),u)^p& = \int_{S^{n-1}}\|\xi\|_{p,\lam,f}^{-n-p} ((1-\lam)\la \xi,u\ra_+^p + \lam \la \xi,u\ra_-^p)\, d\xi\\
&=(n+p) \int_{S^{n-1}}\int_0^{\|\xi\|_{p,\lam,f}^{-1}} ((1-\lam)\la \xi,u\ra_+^p + \lam \la \xi,u\ra_-^p) r^{n+p-1} dr d\xi\\
&= (n+p) \int_{B_{\lam,p}(f)}((1-\lam)\la u, y\ra_+^p + \lam \la u, y\ra_-^p) dy\\
&= (n+p) \alpha_{n,p} |B_{\lam,p}(f)| \, h(\Gam_{\lam,p}B_{\lam,p}(f),u)^p,
\end{align*}
which implies \eqref{eq:relationkey}.
\end{proof}

We also need the following lemma.
\begin{lemma}\label{energy}
Let $f\in W^{1,p}(\R^n)$ be not $0$ a.e. on $\R^n$. Then
\begin{equation}\label{eq:intkey}
\int_{\R^n} h(K_{\lam,p}(f),\na f(y))^p dy = \lt(\frac{\mE_{\lam,p}(f)}{c_{n,p}}\rt)^{-n}.
\end{equation}
\end{lemma}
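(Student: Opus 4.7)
The plan is to expand the definition of $h(K_{\lam,p}(f),\cdot)^p$, apply Fubini's theorem, and then recognize the resulting integrand as a polar volume.

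First I would substitute $x = \na f(y)$ into the definition
$$h(K_{\lam,p}(f),x)^p = \int_{S^{n-1}} \|\xi\|_{p,\lam,f}^{-n-p} \bigl((1-\lam)\la x,\xi\ra_+^p + \lam \la x,\xi\ra_-^p\bigr)\,d\xi$$
and integrate over $\R^n$. The integrand is nonnegative, so Fubini's theorem applies unconditionally, giving
$$\int_{\R^n} h(K_{\lam,p}(f),\na f(y))^p\,dy = \int_{S^{n-1}} \|\xi\|_{p,\lam,f}^{-n-p} \int_{\R^n}\bigl((1-\lam)\la \na f(y),\xi\ra_+^p + \lam \la \na f(y),\xi\ra_-^p\bigr) dy\, d\xi.$$
Since $\la \na f(y),\xi\ra = D_\xi f(y)$, the inner integral is by definition precisely $\|\xi\|_{p,\lam,f}^p$. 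The factor $\|\xi\|_{p,\lam,f}^{-n-p}$ cancels against this to leave
$$\int_{\R^n} h(K_{\lam,p}(f),\na f(y))^p\,dy = \int_{S^{n-1}} \|\xi\|_{p,\lam,f}^{-n}\,d\xi.$$

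Next I would recall that $\|\cdot\|_{p,\lam,f}$ is the Minkowski functional of $B_{\lam,p}(f)$, hence $\|\cdot\|_{p,\lam,f} = h((B_{\lam,p}(f))^*,\cdot)$. Applying the polar volume formula \eqref{eq:polarvolume} to the body $(B_{\lam,p}(f))^*$ (whose polar is $B_{\lam,p}(f)$ itself, since $B_{\lam,p}(f)\in \mK_0^n$ by Lemma \ref{key1}), we obtain
$$\int_{S^{n-1}} \|\xi\|_{p,\lam,f}^{-n}\,d\xi = \int_{S^{n-1}} h((B_{\lam,p}(f))^*,\xi)^{-n}\,d\xi = n\,V(B_{\lam,p}(f)).$$
Finally, combining with \eqref{eq:affineenergy1}, which can be rewritten as $n\,V(B_{\lam,p}(f)) = (\mE_{\lam,p}(f)/c_{n,p})^{-n}$, yields the desired identity \eqref{eq:intkey}.

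There is no real obstacle here: the only points requiring care are the justification of Fubini (handled by nonnegativity of the integrand) and ensuring that $\|\xi\|_{p,\lam,f}$ is bounded away from $0$ and $\infty$ on $S^{n-1}$ so that the polar formula applies; both facts are already guaranteed by Lemma \ref{key1}.
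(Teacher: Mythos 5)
Your proof is correct and follows essentially the same route as the paper: expand the support function, swap the order of integration by Fubini, recognize the inner integral as $\|\xi\|_{p,\lam,f}^p$, identify $\int_{S^{n-1}}\|\xi\|_{p,\lam,f}^{-n}\,d\xi$ with $n\,V(B_{\lam,p}(f))$, and conclude via \eqref{eq:affineenergy1}. The only cosmetic difference is that you pass through the polar body and the bipolar theorem to justify the volume identity, whereas one can just as directly use the radial-function/polar-coordinate formula $\int_{S^{n-1}}\rho(B_{\lam,p}(f),\xi)^n\,d\xi = n\,V(B_{\lam,p}(f))$.
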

\begin{proof}
By Fubini's theorem and \eqref{eq:affineenergy1}, we have
\begin{align*}
\int_{\R^n} h(K_{\lam,p}(f),\na f(y))^p dy&= \int_{\R^n} \int_{S^{n-1}} \|\xi\|_{p,\lam,f}^{-n-p} ((1-\lam)\la \xi, \na f(y)\ra_+^p + \lam\la \xi, \na f(y)\ra_-^p) d\xi dy\\
&=\int_{S^{n-1}}\|\xi\|_{p,\lam,f}^{-n-p} \int_{\R^n}((1-\lam)\la \xi, \na f(y)\ra_+^p + \lam\la \xi, \na f(y)\ra_-^p) dy d\xi\\
&= \int_{S^{n-1}} \|\xi\|_{p,\lam,f}^{-n} d\xi\\
&= n|B_{\lam,p}(f)|\\
& = \lt(\frac{\mE_{\lam,p}(f)}{c_{n,p}}\rt)^{-n}.
\end{align*}
\end{proof}
Using Lemma \ref{energy}, we obtain the following estimate.
\begin{proposition}\label{crucial}
Let $f\in W^{1,p}(\R^n)$ be not $0$ a.e. on $\R^n$. Then
\begin{equation}\label{eq:crucial}
\lt(\frac{\om_n}{V(K_{\lam,p}(f))}\rt)^{1/n} \lt(\int_{\R^n} h(K_{\lam,p}(f),\na f(y))^p dy\rt)^{1/p} \leq \mE_{\lam,p}(f),
\end{equation}
with equality if and only if $B_{\lam,p}(f)$ is an origin-centered ellipsoid.
\end{proposition}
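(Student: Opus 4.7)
The plan is to substitute everything in the left-hand side of \eqref{eq:crucial} using the three identities already at our disposal, and to reduce the inequality to the general $L_p$ Busemann-Petty centroid inequality (Theorem \ref{HS09}) applied to $B_{\lam,p}(f)$.

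First, I would replace the gradient integral by $(\mE_{\lam,p}(f)/c_{n,p})^{-n}$ via Lemma \ref{energy}. Then I would rewrite $V(K_{\lam,p}(f))$ using Lemma \ref{key2}: the relation $K_{\lam,p}(f) = ((n+p)\al_{n,p} V(B_{\lam,p}(f)))^{1/p}\,\Gam_{\lam,p}B_{\lam,p}(f)$ gives
\[
V(K_{\lam,p}(f)) = \bigl((n+p)\al_{n,p}\bigr)^{n/p}\, V(B_{\lam,p}(f))^{n/p}\, V(\Gam_{\lam,p}B_{\lam,p}(f)).
\]
Finally, I would use \eqref{eq:affineenergy1} to trade $\mE_{\lam,p}(f)$ itself for an expression involving $V(B_{\lam,p}(f))$.

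After these substitutions, the inequality \eqref{eq:crucial} becomes equivalent, after cancellation of all powers of $V(B_{\lam,p}(f))$ (the exponent $(n+p)/(np) - 1/p - 1/n$ is zero), to
\[
V(\Gam_{\lam,p}B_{\lam,p}(f)) \geq V(B_{\lam,p}(f)),
\]
multiplied by a dimensional constant. That constant must reduce to $1$; this is a direct bookkeeping check using $\al_{n,p} = \om_{n+p-2}/((n+p)\om_n\om_{p-1})$ together with the definition $c_{n,p} = (n\om_n)^{1/n}(n\om_n\om_{p-1}/\om_{n+p-2})^{1/p}$. The reduced inequality is exactly the general $L_p$ Busemann-Petty centroid inequality \eqref{eq:HS09}, which is applicable since $B_{\lam,p}(f) \in \mK_0^n$ by Lemma \ref{key1}.

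I do not anticipate a real obstacle: the whole proof is a substitution followed by invoking Theorem \ref{HS09}. The only care needed is the constant bookkeeping, which I would do by writing out the $p$-th powers of both sides to avoid fractional exponents. For the equality case, Theorem \ref{HS09} gives equality in $V(\Gam_{\lam,p}B_{\lam,p}(f)) \geq V(B_{\lam,p}(f))$ if and only if $B_{\lam,p}(f)$ is an origin-centered ellipsoid, which is precisely the stated equality condition of Proposition \ref{crucial}.
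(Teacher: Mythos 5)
Your proposal is correct and follows essentially the same route as the paper: both reduce \eqref{eq:crucial} to the general $L_p$ Busemann-Petty centroid inequality $V(\Gam_{\lam,p}B_{\lam,p}(f)) \geq V(B_{\lam,p}(f))$ by combining \eqref{eq:relationkey}, \eqref{eq:affineenergy1}, and \eqref{eq:intkey}, and the constant bookkeeping checks out since $(n+p)\al_{n,p} = \om_{n+p-2}/(\om_n\om_{p-1})$ makes the residual constant equal to $c_{n,p}$. The equality case is also handled identically via the equality condition in Theorem \ref{HS09}.
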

\begin{proof}
It follows from the general $L_p$ Busemann-Petty centroid inequality (Theorem \ref{HS09}) and \eqref{eq:relationkey} that
\begin{equation}\label{eq:step}
V(\Klp) \geq ((n+p)\al_{n,p})^{n/p} V(\Blp)^{(n+p)/p} =((n+p)\al_{n,p})^{n/p} \lt(\frac1n\, \lt[\frac{\mE_{\lam,p}(f)}{c_{n,p}}\rt]^{-n}\rt)^{(n+p)/p},
\end{equation}
with equality if and only if $\Blp$ is an origin-centered ellipsoid.

Combining \eqref{eq:step} and \eqref{eq:intkey} finishes the proof.
\end{proof}

Finally, we will need the following elementary lemma.
\begin{lemma}
Let $K\in \mK_0^n$ such that $V(K) = \om_n$. Then for any function $f\in W^{1,p}(\R^n)$ and $\lam \in [0,1]$, we have
\begin{equation}\label{eq:last}
\int_{\R^n} h(K,-\na f^K(x))^p dx = \mE_{\lam,p}(f^\star)^p.
\end{equation}
\end{lemma}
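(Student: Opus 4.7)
The plan is to reduce both sides of \eqref{eq:last} to the common one--dimensional integral $n\om_n\int_0^\infty |F'(r)|^p r^{n-1}\,dr$, where $F(r):=f^*(\om_n r^n)$. Since $V(K)=\om_n$, the dilate $\tilde K$ appearing in the definition of $f^K$ coincides with $K$, and hence $f^K(x)=F(\|x\|_K)$ and $f^\star(x)=F(|x|)$. The right--hand side is then immediate from \eqref{eq:normalized} and Euclidean polar coordinates: $\mE_{\lam,p}(f^\star)^p = \|\na f^\star\|_p^p = n\om_n\int_0^\infty |F'(r)|^p r^{n-1}\,dr$.

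For the left--hand side I would first differentiate to obtain $-\na f^K(x) = |F'(\|x\|_K)|\,\na\|x\|_K$ a.e., the absolute value arising because $F$ is nonincreasing. The crucial pointwise identity is
$$h(K,\na\|x\|_K)=1 \qquad \text{for a.e. } x\ne 0,$$
which I would deduce from the duality $\|y\|_K = h(K^*,y)$ together with the classical fact that $\na h(L,\cdot)$ takes its values on $\pa L$ at any point of differentiability. Applied to $L=K^*$ this places $\na\|x\|_K$ in $\pa K^*=\{z:h(K,z)=1\}$, and therefore $h(K,-\na f^K(x))=|F'(\|x\|_K)|$ almost everywhere.

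It then remains to compute $\int_{\R^n}|F'(\|x\|_K)|^p\,dx$. Passing to Euclidean polar coordinates $x=s\xi$ with $s>0$ and $\xi\in S^{n-1}$, and using $\|x\|_K=s/\rho(K,\xi)$ together with $dx=s^{n-1}\,ds\,d\xi$, the substitution $u=s/\rho(K,\xi)$ decouples the integral as
$$\lt(\int_0^\infty |F'(u)|^p u^{n-1}\,du\rt)\lt(\int_{S^{n-1}}\rho(K,\xi)^n\,d\xi\rt).$$
The second factor equals $nV(K)=n\om_n$ by the standard polar formula for the volume of a star body, so the left--hand side of \eqref{eq:last} is $n\om_n\int_0^\infty |F'(u)|^p u^{n-1}\,du$, matching the expression already obtained for $\mE_{\lam,p}(f^\star)^p$.

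The main technical obstacle I anticipate is the justification of the identity $h(K,\na\|x\|_K)=1$: the Minkowski functional is only Lipschitz and convex, not $C^1$ in general, so one must restrict to its points of differentiability, which form a set of full Lebesgue measure by Rademacher's theorem; no $C^1$ regularity of $\pa K$ is needed since only almost--everywhere values enter the integrals. A minor related point is the chain rule for $F\circ\|\cdot\|_K$, which is legitimate because $F$, being the radial profile of $f^\star\in W^{1,p}(\R^n)$, is locally absolutely continuous on $(0,\infty)$.
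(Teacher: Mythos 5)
Your proof is correct and rests on the same central identity as the paper's, namely that $h(K,\nabla\|\cdot\|_K)=1$ at every point of differentiability of the Minkowski functional, which collapses $h(K,-\nabla f^K(x))$ to the scalar radial derivative $|F'(\|x\|_K)|$. The two arguments diverge only in the final integration. The paper decomposes $\mathbb R^n$ into the level sets $\{\|x\|_K=t\}=t\,\partial K$ via the coarea formula, using $|\nabla\|x\|_K|^{-1}=h_K(\sigma_K(x))$ together with the surface-area-measure identity $\int_{S^{n-1}}h_K\,dS(K,\cdot)=nV(K)$. You instead pass to Euclidean polar coordinates $x=s\xi$ and perform the change of variables $u=s/\rho(K,\xi)$, which separates the radial and angular factors and produces $\int_{S^{n-1}}\rho(K,\xi)^n\,d\xi=nV(K)$ by the polar volume formula for star bodies. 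Both routes reduce the angular factor to $nV(K)=n\omega_n$; your substitution is slightly more elementary, as it bypasses the coarea formula and the surface area measure entirely, and needs differentiability of $\|\cdot\|_K$ only a.e.\ in $\mathbb R^n$ rather than $\mathcal H^{n-1}$-a.e.\ on almost every dilate of $\partial K$.
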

\begin{proof}
Since $\|\cdot\|_K$ is a Lipschitz function on $\R^n$, it is differentiable at a.e. $x\in \R^n$. For such a point $x$ of differentiability, there exists a unique $x^*= \nabla(\|\cdot\|_K)(x)\in \R^n$ such that $h_K(x^*) = 1$, $\la x^*, x\ra = \|x\|_K$. Let 
$$\sigma_K(x) = \frac{x^*}{|x^*|}.$$
Since $\|\cdot\|_K$ is positive homogeneous of degree $1$, $\sigma_K(tx) = \sigma_K(x)$ with $t > 0$. Note that for $x\in \pa K$, $\si_K(x)$ is the outer unit normal vector at $x$. A simple computation shows that
$$\na f^K(x) = (f^*)'(\om_n \|x\|_K^n) n\om_n \|x\|_K^{n-1} x^*,$$
for a.e. $x\in \R^n$. Thus 
$$h(K,-\na f^K(x)) = (-f^*)'(\om_n \|x\|_K^n) n\om_n \|x\|_K^{n-1}.$$
Since $|x^*|^{-1} = h(K,\si_K(x))$, an application of the coarea formula together with \eqref{eq:normalized} yields
\begin{align*}
\int_{\R^n} h(K,-\na f^K(x))^p dx& = \int_{\R^n} (-(f^*)'(\om_n \|x\|_K^n) n\om_n \|x\|_K^{n-1})^p dx\\
&=\int_0^\infty (-(f^*)'(\om_n t^n)n\om_n)^p t^{(n-1)(p+1)} dt \int_{\pa K} h_K(\si_K(x^*)) d\mH^{n-1}(x)\\
&=(n\om_n)^p\int_0^\infty (-(f^*)'(\om_n t^n))^p t^{(n-1)(p+1)} dt \int_{S^{n-1}} h_K(u) dS(K,u)\\
&= (n\om_n)^{p+1}\int_0^\infty (-(f^*)'(\om_n t^n))^p t^{(n-1)(p+1)} dt\\
&= \int_{\R^n} |\na f^\star (x)|^p dx\\
&=\mE_{\lam,p}(f^\star)^p.
\end{align*}
This proves the lemma.
\end{proof}
We are now in a position to prove Theorem \ref{maintheorem}.

\begin{proof}[Proof of Theorem \ref{maintheorem}:] Let $f\in W^{1,p}(\R^n)$ be not $0$ a.e. on $\R^n$. It is easy to see that
\begin{equation}\label{eq:sym}
\mE_{\lam,p}(f) = \mE_{\lam,p}(-f).
\end{equation}
Let $B_{\lam,p}(-f)$ and $K_{\lam,p}(-f)$ be the convex bodies defined above. Denote by
$$K = \lt(\frac{\om_n}{V(K_{\lam,p}(-f))}\rt)^{1/n} K_{\lam,p}(-f).$$
Then $K\in \mK_0^n$ by Lemma \ref{key2} and $V(K) = \om_n$. Applying \eqref{eq:crucial} yields that
\begin{equation}\label{eq:trunggian}
\mE_{\lam,p}(-f) \geq \lt(\int_{\R^n} h(K, -\na f(x))^p dx\rt)^{1/p},
\end{equation}
with equality if and only if $B_{\lam,p}(-f)$ is an origin-centered ellipsoid. The P\'olya-Szeg\"o principle for convex symmetrization (Theorem \ref{Convexsymme}) implies that 
\begin{equation}\label{eq:PSconvex}
\int_{\R^n} h(K,-\na f(x))^p dx \geq \int_{\R^n} h(K, -\na f^K(x))^p dx = \mE_{\lam,p}(f^\star)^p.
\end{equation}
Combining \eqref{eq:sym}, \eqref{eq:trunggian} and \eqref{eq:PSconvex} proves the general affine P\'olya-Szeg\"o principle, that is, the inequality \eqref{eq:PSprin}.

Suppose that $f$ is a nonnegative function satisfying condition \eqref{eq:cond} and that 
$$\mE_{\lam,p}(f) = \mE_{\lam,p}(f^\star).$$
The equality holds in \eqref{eq:trunggian}. This implies that $B_{\lam,p}(-f)$ is an origin-centered ellipsoid, hence so is $K$ by its definition and \eqref{eq:relationkey}. Since $V(K) = \om_n$, there exists $T\in SL_n$ such that $K = TB_2^n$. We have
$\|x\|_K = |T^{-1}x|$, thus $f^K(x) = f^{\star}(T^{-1}x)$. The latter equality implies that 
$$V(\{x\, :\, |\na f^K(x)| =0\}\cap \{x\, :\, 0 < f^K(x) < {\rm ess} \sup f\}) =0.$$
Since equality also holds in \eqref{eq:PSconvex}, by Theorem \ref{Convexsymme}, there exists $x_0\in \R^n$ such that $f(x) = f^K(x+ x_0)$ for a.e. $x\in \R^n$. This finishes the proof of Theorem \ref{maintheorem} since $K$ is an origin-centered ellipsoid.
\end{proof}

We conclude this section with the remark that, when $p\in (1,n)$, the conclusions of Theorem \ref{maintheorem} also hold  for functions in the homogeneous Sobolev space $\dWp$ of real-valued functions on $\R^n$ which vanish at infinity such that their weak derivatives are in $L_p(\R^n)$. A function $f$ on $\R^n$ is said to vanish at infinity if for any $t > 0$, the Lesbegue measure of the set $\{x\in\R^n\, :\, |f(x)| > t\}$ is finite. Note that $W^{1,p}(\R^n) \subset \dWp$. And if $p\in (1,n)$, then $\dWp\subset L^{np/(n-p)}(\R^n)$ because of the Sobolev embedding theorem.


\section{Applications to affine Sobolev-type inequalities}
In this section, we use the general affine P\'olya-Szeg\"o principle to establish affine Sobolev-type inequalities related to the general affine $L_p$ energy. For example, we prove general affine Sobolev, general affine Morrey-Sobolev, general affine Gagliardo-Nirenberg and general affine logarithmic-Sobolev inequalities. These inequalities are  sharp and stronger than their classical Euclidean counterparts. Some of them are already known \cite{CLYZ09,HS09b,HSX12,LYZ02,LYZ06,TW15}. However, the characterization of their extremal functions were left open. Using the Brothers-Ziemer-type result established in Theorem \ref{maintheorem}, we can now classify all extremal functions for these inequalities. This classification of extremal functions seems to be new. For other asymmetric functional inequalities, see, e.g.,\cite{Ma,Ober,Schuster,Web}. 

\subsection{General affine $L_p$ Sobolev inequality}
The main result of this subsection is the following general affine $L_p$ Sobolev inequality.
\begin{corollary}\label{Sob}
Let $\lam \in [0,1]$, $p\in (1,n)$ and let $p^* = np/(n-p)$. Then for any function $f\in \dWp$, we have
\begin{equation}\label{eq:Sob}
S(n,p) \mE_{\lam,p}(f) \geq \|f\|_{p^*}
\end{equation}
where
$$S(n,p) = \pi^{-1/2} n^{-1/p} \lt(\frac{p-1}{n-p}\rt)^{1-1/p} \lt(\frac{\Gam(1 +n/2) \Gam(n)}{\Gam(n/p) \Gam(1+ n-n/p)}\rt)^{1/n}.$$
Moreover, equality holds in \eqref{eq:Sob} if and only if for a.e. $x\in \R^n$,
$$f(x) = \pm (a + |A(x-x_0)|^{p/(p-1)})^{1-n/p},$$
for some invertible linear map $A \in GL_n$, $a > 0$ and $x_0\in \R^n$.
\end{corollary}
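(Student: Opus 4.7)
The strategy is to combine the general affine P\'olya-Szeg\"o principle just established in Theorem \ref{maintheorem} with the classical sharp $L^p$ Sobolev inequality of Aubin--Talenti. The former transfers the problem from the given $f$ to its symmetric rearrangement $f^\star$, and the latter then supplies both the sharp bound and the explicit form of the extremals. In particular, no new analytic inequality has to be proved.

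First I would derive the inequality. By Theorem \ref{maintheorem} combined with the normalization identity \eqref{eq:normalized},
\begin{equation*}
\mE_{\lam,p}(f) \geq \mE_{\lam,p}(f^\star) = \|\na f^\star\|_p.
\end{equation*}
Since equimeasurable rearrangement preserves all $L^q$ norms, $\|f^\star\|_{p^*} = \|f\|_{p^*}$, and applying the classical sharp $L^p$ Sobolev inequality to the radial function $f^\star \in \dWp$ gives
\begin{equation*}
\|\na f^\star\|_p \geq S(n,p)^{-1}\|f^\star\|_{p^*} = S(n,p)^{-1}\|f\|_{p^*},
\end{equation*}
where $S(n,p)$ is precisely the Aubin--Talenti constant in the form stated. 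Multiplying through yields \eqref{eq:Sob}.

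For the characterization of extremals, equality in \eqref{eq:Sob} forces equality in both of the preceding inequalities. Equality in the classical Sobolev step for the radial nonnegative function $f^\star$ forces $f^\star$ to be a radial Aubin--Talenti bubble, i.e.\ $f^\star(x) = C(a + |x|^{p/(p-1)})^{1-n/p}$ for some constants. A direct computation then shows that $\na f^\star$ vanishes only at $x=0$ and that $f^\star$ attains its essential supremum only there, so the Brothers--Ziemer hypothesis \eqref{eq:cond} is automatically satisfied. A standard sign argument (using that the Aubin--Talenti bubbles are strictly positive and that $|\na |f|| = |\na f|$ a.e.\ on $\{f\neq 0\}$) reduces to the case $f\geq 0$. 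Invoking the equality clause of Theorem \ref{maintheorem} gives $f(x) = f^E(x+x_0)$ for some origin-centered ellipsoid $E$ and some $x_0\in\R^n$. Writing $E$ as the image of a Euclidean ball under an invertible linear map $A^{-1}$, the definition $f^E(x) = f^*(\om_n \|x\|_{\tilde E}^n)$ translates, after a change of variables absorbing the volume normalization into $a$, into the claimed form $\pm(a + |A(x-x_0)|^{p/(p-1)})^{1-n/p}$.

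The main obstacle is bookkeeping rather than analysis: one must match the ellipsoid supplied by the equality clause of Theorem \ref{maintheorem} to the invertible linear map $A$ appearing in the stated extremal, verify that the sharp constant emerging from the classical Sobolev inequality is exactly the $S(n,p)$ in the statement, and handle the sign ambiguity carefully. Once these details are settled, the corollary is an essentially immediate consequence of Theorem \ref{maintheorem} and the classical Aubin--Talenti result.
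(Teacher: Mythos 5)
Your derivation of the inequality itself and your treatment of the equality case \emph{after} reducing to $f\geq 0$ are essentially what the paper does. However, your sign-reduction step contains a genuine gap. You propose to reduce to $f\geq 0$ via ``$|\nabla|f||=|\nabla f|$ a.e.\ on $\{f\neq 0\}$,'' but this identity only controls the Euclidean quantity $\|\nabla f\|_p$; it does \emph{not} show $\mE_{\lam,p}(|f|)\leq \mE_{\lam,p}(f)$ for $\lam\neq 1/2$. The affine energy depends on the signed directional derivatives $(D_uf)_\pm$, not merely on $|\nabla f|$. Passing from $f$ to $|f|$ flips the sign of $\nabla f$ on $\{f<0\}$ and hence exchanges the roles of $\lam$ and $1-\lam$ on that set: one has
$\|u\|_{p,\lam,f}^p=\|u\|_{p,\lam,f_+}^p+\|u\|_{p,1-\lam,f_-}^p$
but
$\|u\|_{p,\lam,|f|}^p=\|u\|_{p,\lam,f_+}^p+\|u\|_{p,\lam,f_-}^p$,
and in general there is no pointwise comparison between these. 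So one cannot conclude that $|f|$ is again extremal.

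The paper's actual sign reduction is different and is the key technical point you are missing: write $f=f_+-f_-$, observe the norm decomposition above, apply the (already established) general affine $L_p$ Sobolev inequality separately to $f_+$ (with parameter $\lam$) and to $f_-$ (with parameter $1-\lam$), and combine these via the reverse Minkowski inequality inherent in the $L^{-n/p}$ structure of $\mE_{\lam,p}$ together with the strict concavity of $t\mapsto t^{p/p^*}$. Strict concavity then forces $\|f_+\|_{p^*}=0$ or $\|f_-\|_{p^*}=0$, i.e.\ $f$ has constant sign. Only after that can one invoke the Brothers--Ziemer clause of Theorem~\ref{maintheorem}, exactly as you do in your final paragraph. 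Replace the hand-waved sign argument with this decomposition argument and your proof is complete and matches the paper's.
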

Corollary \ref{Sob} includes the sharp affine $L_p$ Sobolev inequality of Lutwak et al. \cite{LYZ02} (corresponding to $\lam =1/2$), and the asymmetric affine $L_p$ Sobolev inequality of Haberl and Schuster \cite{HS09,HS09b} (corresponding to $\lam =0$). Inequality \eqref{eq:Sob} was recently proved by Wang in \cite{TW15} for functions $f\in W^{1,p}(\R^n)$ by exploiting the solution to the discrete functional $L_p$ Minkowski problem. In fact, Wang proved it for the dense subspace of $W^{1,p}(\R^n)$ of piecewise affine functions, and then obtained the inequality for functions in $W^{1,p}(\R^n)$ by a density argument.

\begin{proof}
Inequality \eqref{eq:Sob} is an easy consequence of the general affine P\'olya-Szeg\"o principle and the sharp Sobolev inequality on $\R^n$ with its equality conditions (see \cite{Ta76,CNV}). 

Suppose that equality holds in \eqref{eq:Sob} for a function $f$ which is not $0$ a.e. on $\R^n$. We first prove that $f$ does not change sign on $\R^n$. Indeed, writing $f =f_+ -f_-$, we have 
$$(1-\lam) (D_uf)_+^p + \lam (D_uf)_-^p = (1-\lam)(D_uf_+)_+^p + \lam (D_uf_+)_-^p + \lam (D_uf_-)_+^p + (1-\lam )(D_uf_-)_-^p,$$
hence
$$\|u\|_{p,\lam,f}^p = \|u\|_{p,\lam,f_+}^p + \|u\|_{p,1-\lam,f_-}^p.$$
The Minkowski inequality, and the general affine $L_p$ Sobolev inequality lead to
\begin{align}\label{eq:immestep}
S(n,p)\mE_{\lam,p}(f)^p&\geq S(n,p)\mE_{\lam,p}(f_+)^p + S(n,p)\mE_{1-\lam,p}(f_-)\notag\\
&\geq \lt(\int_{\R^n} f_+(x)^{p^*} dx\rt)^{p/p^*} + \lt(\int_{\R^n} f_-(x)^{p^*} dx\rt)^{p/p^*}\notag\\
&\geq \|f\|_{p^*}^p,
\end{align}
where the last inequality comes from the concavity of the function $t\to t^{p/p^*}$ on $(0,\infty)$. Since equality holds in \eqref{eq:Sob}, it also holds in \eqref{eq:immestep}. Thus either $\|f_+\|_{p^*}$ or $\|f_-\|_{p^*}$ must be zero because of the strict concavity of the function $t\to t^{p/p^*}$ on $(0,\infty)$. Hence $f$ does not change sign.

Without loss of generality we can assume that $f$ is nonnegative. Since equality holds in \eqref{eq:Sob}, we have equality in the general affine P\'olya-Szeg\"o principle and in the Sobolev inequality for $f^\star$. Thus $f^\star$ has the form as above, which ensures that the condition \eqref{eq:cond} in Theorem \ref{maintheorem} is satisfied. Theorem \ref{maintheorem} hence yields the existences of $x_0\in \R^n$ and an origin-centered ellipsoid $E$ such that $f(x) = f^E(x+ x_0)$ for a.e. $x\in \R^n$.
\end{proof}
\subsection{General affine Morrey-Sobolev inequality}
The classical Morrey-Sobolev inequality \cite{Ta93} states that if $f\in W^{1,p}(\R^n), p > 1$ such that $V({\rm supp} f) < \infty$. Then
$$\|f\|_\infty \leq b_{n,p} V({\rm supp} f)^{(p-n)/np}\|\na f\|_p,$$
where
$$b_{n,p} =n^{-1/p}\om_n^{-1/n} \lt(\frac{p-1}{p-n}\rt)^{(p-1)/p}.$$ 
Equality holds if and only if 
$$f(x) = \pm a (1 -|b(x-x_0)|^{(p-n)/(p-1)})_+,$$
for some $a, b> 0$ and $x_0\in \R^n$.

In this subsection, a general affine counterpart of this inequality is established.
\begin{corollary}\label{MS}
Let $f\in W^{1,p}(\R^n), p > 1$ such that $V({\rm supp} f) < \infty$, then
\begin{equation}\label{eq:MS}
\|f\|_\infty \leq n^{-1/p}\om_n^{-1/n} \lt(\frac{p-1}{p-n}\rt)^{(p-1)/p} V({\rm supp} f)^{(p-n)/np}\mE_{\lam,p}(f).
\end{equation}
Equality holds in \eqref{eq:MS} if and only if 
$$f(x) = \pm a (1 -|A(x-x_0)|^{(p-n)/(p-1)})_+,$$
for some $A\in GL_n$, $a> 0$ and $x_0\in \R^n$.
\end{corollary}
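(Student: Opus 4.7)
The plan is to mirror the proof of Corollary \ref{Sob}: combine the general affine P\'olya-Szeg\"o principle with the classical Morrey-Sobolev inequality applied to $f^\star$, and then extract the extremizers by invoking the Brothers-Ziemer-type conclusion of Theorem \ref{maintheorem}. The inequality \eqref{eq:MS} itself is immediate from the fact that symmetric rearrangement preserves both the $L^\infty$ norm and the measure of the support: the normalization \eqref{eq:normalized} together with \eqref{eq:PSprin} and the classical Morrey-Sobolev inequality applied to $f^\star$ give the chain
$$\|f\|_\infty = \|f^\star\|_\infty \le b_{n,p}\,V({\rm supp}\,f^\star)^{(p-n)/(np)}\|\na f^\star\|_p = b_{n,p}\,V({\rm supp}\,f)^{(p-n)/(np)}\,\mE_{\lam,p}(f^\star) \le b_{n,p}\,V({\rm supp}\,f)^{(p-n)/(np)}\,\mE_{\lam,p}(f),$$
where $b_{n,p}=n^{-1/p}\om_n^{-1/n}((p-1)/(p-n))^{(p-1)/p}$.

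For the equality characterization, the first step is a sign-change argument paralleling \eqref{eq:immestep}. Writing $f = f_+ - f_-$, the identity $\|u\|_{p,\lam,f}^p = \|u\|_{p,\lam,f_+}^p + \|u\|_{p,1-\lam,f_-}^p$ combined with Minkowski's inequality yields $\mE_{\lam,p}(f)^p \ge \mE_{\lam,p}(f_+)^p + \mE_{1-\lam,p}(f_-)^p$. Setting $V_\pm = V({\rm supp}\,f_\pm)$ and $V = V_+ + V_-$, applying \eqref{eq:MS} separately to $f_\pm$ and then combining with the assumed equality for $f$, one should arrive at
$$\|f\|_\infty^p \ge \lt(\frac{V}{V_+}\rt)^{\!(p-n)/n}\|f_+\|_\infty^p + \lt(\frac{V}{V_-}\rt)^{\!(p-n)/n}\|f_-\|_\infty^p.$$
Assuming without loss of generality $\|f\|_\infty = \|f_+\|_\infty$ and noting that $(p-n)/n > 0$ makes both prefactors $\ge 1$, this forces $V_- = 0$, so $f \ge 0$ a.e.

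Once $f \ge 0$ is established, equality in \eqref{eq:MS} propagates to force equality both in the general affine P\'olya-Szeg\"o principle applied to $f$ and in the classical Morrey-Sobolev inequality applied to $f^\star$. The latter identifies $f^\star$ as a truncated power of the form $a(1-|b\,\cdot|^{(p-n)/(p-1)})_+$, which is strictly decreasing on its support and smooth in the interior, so the absolute continuity condition \eqref{eq:cond} is immediately verified. Theorem \ref{maintheorem} then produces $x_0 \in \R^n$ and an origin-centered ellipsoid $E = TB_2^n$ with $f(x) = f^E(x+x_0) = f^\star(T^{-1}(x+x_0))$ a.e., and setting $A = T^{-1}$ yields the asserted form. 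The only mildly delicate point is the sign-change step; the rest is a transcription of the Sobolev case, with the superadditivity of the $L^{p^*}$ norm used there replaced here by the monotonicity of $V \mapsto V^{(p-n)/n}$, which in the Morrey regime $p>n$ rigidly prevents a true sign change in an extremizer.
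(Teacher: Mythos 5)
Your argument is correct and follows the paper's route: affine P\'olya--Szeg\"o principle plus classical Morrey--Sobolev for the inequality, then a sign-change argument followed by Theorem~\ref{maintheorem} for the equality case. The one place you depart is the sign-change step. The paper applies~\eqref{eq:MS} to $f_\pm$, then invokes H\"older's inequality
\[
a_1^{(n-p)/n}\|f_+\|_\infty^p + a_2^{(n-p)/n}\|f_-\|_\infty^p \;\ge\; (a_1+a_2)^{(n-p)/n}\bigl(\|f_+\|_\infty^n+\|f_-\|_\infty^n\bigr)^{p/n},
\]
and then the strict superadditivity $\|f_+\|_\infty^n+\|f_-\|_\infty^n>\|f\|_\infty^n$ to land a contradiction. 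You instead multiply through by $V^{(p-n)/n}$ to get
\[
\|f\|_\infty^p \;\ge\; \bigl(V/V_+\bigr)^{(p-n)/n}\|f_+\|_\infty^p + \bigl(V/V_-\bigr)^{(p-n)/n}\|f_-\|_\infty^p,
\]
and, after the harmless normalization $\|f\|_\infty=\|f_+\|_\infty$, observe that if $V_->0$ the first term alone already exceeds $\|f\|_\infty^p$ because $(p-n)/n>0$ forces $(V/V_+)^{(p-n)/n}>1$. This avoids H\"older entirely and is arguably cleaner, isolating the genuinely Morrey-specific mechanism (the support-volume factor becomes a penalty for splitting, since $p>n$) rather than blending it with an additional $\ell^n$ estimate. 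Everything else---the passage from $\mE_{\lam,p}(f)^p\geq\mE_{\lam,p}(f_+)^p+\mE_{1-\lam,p}(f_-)^p$ (a reverse-Minkowski step for the negative exponent $-n/p$), the verification of condition~\eqref{eq:cond} via the explicit form of $f^\star$, and reading off $A=T^{-1}$ from $f(x)=f^\star(T^{-1}(x+x_0))$---matches the paper.
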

Inequality \eqref{eq:MS} was proved in \cite{CLYZ09} for $\lam =1/2$, in \cite{HS09} for $\lam =0$ and recently in \cite{TW15} in general for all $\lam \in [0,1]$. 
\begin{proof}
Inequality \eqref{eq:MS} is easily derived from the general affine P\'olya-Szeg\"o principle and the classical Morrey-Sobolev inequality above. 

Suppose that equality holds in \eqref{eq:MS} for a function $f$ which is not $0$ a.e. on $\R^n$. We first show that $f$ does not change sign on $\R^n$. Indeed, writing $f = f_+ -f_-$, we have 
$$\mE_{\lam,p}(f)^p \geq \mE_{\lam,p}(f_+)^p + \mE_{\lam,p}(f_-)^p.$$
If $f_+$ and $f_-$ are not $0$ a.e. on $\R^n$, then 
$$a_1 = V({\rm supp}f_+) > 0\quad \text{ and }\quad a_2 = V({\rm supp} f_-) > 0.$$ 
It follows from \eqref{eq:MS} and the H\"older inequality that
\begin{align*}
b_{n,p}^p\mE_{\lam,p}(f)^p &\geq a_1^{(n-p)/n} \|f_+\|_\infty^p + a_2^{(n-p)/n} \|f_-\|_\infty^p\\
&\geq (a_1+ a_2)^{(n-p)/n} (\|f_+\|_\infty^n + \|f_-\|_\infty^n)^{p/n}\\
&> V({\rm supp}f)^{(n-p)/n} \|f\|_\infty^p.
\end{align*}
This is impossible since equality holds in \eqref{eq:MS}, thus $f$ does not change sign on $\R^n$. Hence, without loss of generality, we may assume that $f$ is nonnegative. Since equality must hold in the classical Morrey-Sobolev inequality for $f^\star$, we have 
$$f^\star(x) = a (1 -|bx|^{(p-n)/(p-1)})_+,$$
for some $a, b > 0$. Hence, condition \eqref{eq:cond} of Theorem \ref{maintheorem} is satisfied. Thus, Theorem \ref{maintheorem} implies that there exist $x_0 \in \R^n$ and an origin-centered ellipsoid $E$ such that $f(x) = f^E(x+ x_0)$ for a.e. $x\in \R^n$.  
\end{proof}
\subsection{General affine Gagliardo-Nirenberg inequality}
The main result of this subsection is the following general affine Gagliardo-Nirenberg inequality.
\begin{corollary}\label{GN}
Let $p\in (1,n)$ and $\al \in (0, n/(n-p))$, $\al\not=1$.
\begin{description}
\item (i) If $\al > 1$, then there exists a constant $G(n,\al,p)$  such that for any function $f\in \dWp\cap L^{\al(p-1)+1}(\R^n)$, we have
\begin{equation}\label{eq:big1}
\|f\|_{\al p} \leq G(n,\al,p) \mE_{\lam,p}(f)^\theta \|f\|_{\al(p-1)+1}^{1-\theta},
\end{equation}
where
$$\theta = \frac{n(\al -1)}{\al(np - (\al p+ 1-\al)(n-p))}$$
and 
$$G(n,\al,p) = \lt(\frac{y(\al-1)^p}{\pi^{p/2}q^{p-1}n}\rt)^{\theta/p} \lt(\frac{qy-n}{qy}\rt)^{1/\al p} \lt(\frac{\Gam(y)\Gam(1 +n/2)}{\Gam(y -n/q)\Gam(1 + n/q)}\rt)^{\theta/n},$$
with $y = (\al p-\al+1)/(\al-1)$ and $q =p/(p-1)$. Moreover, equality holds in \eqref{eq:big1} if and only if there exist $x_0\in \R^n$, $a >0$ and $A\in GL_n$ such that 
$$f(x) = \pm(a + |A(x-x_0)|^{p/(p-1)})^{-1/(\al -1)},$$
for a.e. $x\in \R^n$.
\item (ii) If $\al\in (0,1)$, then there exists a constant $G(n,\al,p)$  such that for any function $f\in \dWp\cap L^{\al p}(\R^n)$, it holds
\begin{equation}\label{eq:small1}
\|f\|_{\al(p-1) +1} \leq G(n,\al,p) \mE_{\lam,p}(f)^\theta \|f\|_{\al p}^{1-\theta},
\end{equation}
where
$$\theta = \frac{n(1-\al)}{(\al p +1-\al)(n-\al(n-p))}$$
and
$$G(n,\al,p) = \lt(\frac{y(1-\al)^p}{\pi^{p/2}q^{p-1} n}\rt)^{\theta/p} \lt(\frac{qy}{qy +n}\rt)^{(1-\theta)/\al p} \lt(\frac{\Gam(y + 1 + n/q)\Gam(1+n/2)}{\Gam(1 +z)\Gam(1 + n/q)}\rt)^{\theta/n}$$
with $y = (\al p -\al +1)/(1-\al)$ and $q = p/(p-1)$. Moreover, equality holds in \eqref{eq:small1} if and only if there exists $x_0\in \R^n$, $a > 0$ and $A\in GL_n$ such that 
$$f(x) = \pm(a - |A(x-x_0)|^{p/(p-1)})_+^{1/(1-\al)},$$
for a.e. $x\in \R^n$.
\end{description} 
\end{corollary}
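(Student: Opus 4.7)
The plan is to follow the template of Corollaries \ref{Sob} and \ref{MS}: combine the general affine P\'olya--Szeg\"o principle (Theorem \ref{maintheorem}) with the classical sharp Gagliardo--Nirenberg inequalities of Del Pino and Dolbeault, whose radial extremizers in the cases $\al>1$ and $\al\in(0,1)$ are respectively $(a+|bx|^{p/(p-1)})^{-1/(\al-1)}$ and $(a-|bx|^{p/(p-1)})_+^{1/(1-\al)}$ with $a,b>0$, and then invoke the Brothers--Ziemer statement of Theorem \ref{maintheorem} to identify all equality cases.

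\textbf{The inequality.} In case (i), for $f\in\dWp\cap L^{\al(p-1)+1}(\R^n)$, Theorem \ref{maintheorem} gives $\mE_{\lam,p}(f)\ge\mE_{\lam,p}(f^\star)=\|\na f^\star\|_p$. Since $f$ and $f^\star$ share every $L^q$-norm, the classical sharp Gagliardo--Nirenberg inequality applied to $f^\star$ produces \eqref{eq:big1} with the stated sharp constant $G(n,\al,p)$. Case (ii) is identical after swapping the roles of the two Lebesgue exponents.

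\textbf{Sign of $f$ at equality.} Write $f=f_+-f_-$ with $f_\pm$ of disjoint support. The pointwise identity
$$\|u\|^p_{p,\lam,f}=\|u\|^p_{p,\lam,f_+}+\|u\|^p_{p,1-\lam,f_-},\qquad u\in S^{n-1},$$
together with the reverse Minkowski inequality in $L^{-n/p}(S^{n-1})$ (valid because $-n/p<0$) yields $\mE_{\lam,p}(f)^p\ge\mE_{\lam,p}(f_+)^p+\mE_{1-\lam,p}(f_-)^p$. Combining this with the additivity $\|f\|_q^q=\|f_+\|_q^q+\|f_-\|_q^q$ at the two relevant exponents $q=\al p$ and $q=\al(p-1)+1$, the symmetry $\mE_{1-\lam,p}=\mE_{\lam,p}$ (which follows from the $u\to-u$ change of variables in the definition of $\mE_{\lam,p}$), and \eqref{eq:big1} applied to each of $f_+$ and $f_-$, a H\"older/Young step calibrated to the homogeneity exponents $\theta\al p$ and $(1-\theta)\al p$ produces a chain of inequalities in which at least one is strict unless $f_+\equiv0$ or $f_-\equiv0$. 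Equality in \eqref{eq:big1} for $f$ therefore forces $f$ to have constant sign; up to replacing $f$ by $-f$, we may assume $f\ge0$.

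\textbf{Identifying the extremizers.} Once $f\ge0$, equality in \eqref{eq:big1} simultaneously forces the general affine P\'olya--Szeg\"o equality $\mE_{\lam,p}(f)=\mE_{\lam,p}(f^\star)$ and the classical Gagliardo--Nirenberg equality for $f^\star$. The latter pins $f^\star$ down to $f^\star(x)=(a+|bx|^{p/(p-1)})^{-1/(\al-1)}$ for some $a,b>0$; in particular $|\na f^\star|$ vanishes only at the origin, so the Brothers--Ziemer condition \eqref{eq:cond} holds. The Brothers--Ziemer conclusion of Theorem \ref{maintheorem} then supplies an origin-centered ellipsoid $E=TB_2^n$ with $T\in GL_n$ and a point $x_0\in\R^n$ for which $f(x)=f^E(x+x_0)$ a.e.; setting $A=T^{-1}$ gives precisely the extremal form asserted in case (i). Case (ii) is handled identically, with extremizer $(a-|bx|^{p/(p-1)})_+^{1/(1-\al)}$, and with the concavity/convexity directions of the sign step reversed, since $\al<1$ makes $\al p<\al(p-1)+1$ and places the larger Lebesgue exponent on the left-hand side of \eqref{eq:small1}.

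\textbf{Main obstacle.} The constant-sign argument is the delicate step. Unlike the Sobolev setting, where a single concavity inequality $(a+b)^{p/p^*}\le a^{p/p^*}+b^{p/p^*}$ closes the argument, the Gagliardo--Nirenberg inequality mixes two distinct $L^q$-norms with the gradient term, and the homogeneity exponents $\theta\al$ and $(1-\theta)\al p/(\al(p-1)+1)$ obtained from a naive H\"older do not sum to $1$. The right homogeneous combination of the inequalities applied to $f_+$ and $f_-$ must therefore be chosen so that a single H\"older or Young inequality closes the loop in the correct direction; moreover, in case (ii) the direction of the resulting concavity step is reversed.
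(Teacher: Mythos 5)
Your proposal follows the paper's blueprint exactly: combine Theorem~\ref{maintheorem} with the sharp classical Gagliardo--Nirenberg inequality of Del Pino--Dolbeault to obtain \eqref{eq:big1}/\eqref{eq:small1}, then prove sign-constancy at equality via the decomposition $f=f_+-f_-$ and the superadditivity $\mE_{\lam,p}(f)^p\geq\mE_{\lam,p}(f_+)^p+\mE_{\lam,p}(f_-)^p$, and finally invoke the Brothers--Ziemer conclusion to get the stated extremizers. Your ``reverse Minkowski in $L^{-n/p}(S^{n-1})$'' observation is also correct and is exactly what the paper has in mind when it writes ``Minkowski inequality'' in this context.

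However, you explicitly flag the crucial step as unresolved, and it is a genuine gap in the proposal as written. You observe that the homogeneity exponents coming out of a naive H\"older do not sum to one, and you say the ``right homogeneous combination must be chosen'' without producing it. The paper supplies exactly this: in case (i), one introduces
\[
\gamma=(1-\theta)\,\frac{\al p}{\al(p-1)+1}=\frac{np-\al p(n-p)}{np-(\al p-\al+1)(n-p)},
\qquad
1-\gamma=\frac{(\al-1)(n-p)}{np-(\al p-\al+1)(n-p)}=\frac{n-p}{n}\,\al\theta,
\]
so that H\"older in the form
\[
\sum_\pm \mE_{\lam,p}(f_\pm)^{\al p\theta}\,\|f_\pm\|_{\al p-\al+1}^{(1-\theta)\al p}
\;\le\;
\Bigl(\sum_\pm \mE_{\lam,p}(f_\pm)^{\al p\theta/(1-\gamma)}\Bigr)^{1-\gamma}
\Bigl(\sum_\pm \|f_\pm\|_{\al p-\al+1}^{\al p-\al+1}\Bigr)^{\gamma}
\]
pairs with the elementary superadditivity $a^s+b^s\le (a+b)^s$ for $s=\al\theta/(1-\gamma)=n/(n-p)>1$. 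This is the ``calibration'' you were searching for, and the inequality $a^s+b^s\le(a+b)^s$ is strict unless $a=0$ or $b=0$, which is what forces $f_+\equiv 0$ or $f_-\equiv 0$. Case (ii) requires an analogous auxiliary exponent, and (as you anticipate) the strictness direction is handled by the same superadditivity with $s=n/(n-p)>1$ rather than by a concavity inequality of the Sobolev type. In short: right strategy, but the key arithmetic that makes the sign-constancy step close is missing and should be supplied.
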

Corollary \ref{GN} was proved in \cite{Zhai11} for $\lam =1/2$, and recently in \cite{HJM15} by a different proof. The case $\lam=1/2$ was proved in \cite{HS09}.
\begin{proof}
\emph{Proof of part $(i)$:} Inequality \eqref{eq:big1} is proved by combining the general affine P\'olya-Szeg\"o principle and the classical Gagliardo-Nirenberg inequality \cite{CNV,DD1,DD2}.

Suppose that equality holds in \eqref{eq:big1} for a function $f$ which is not $0$ a.e. on $\R^n$. We first show that $f$ does not change sign on $\R^n$. Indeed, writing $f=f_+ -f_-$, we have
$$\mE_{\lam,p}(f)^p \geq \mE_{\lam,p}(f_+)^p + \mE_{\lam,p}(f_-)^p.$$
Denote
$$\ga = (1-\theta) \frac{\al p}{\al(p-1) + 1} = \frac{np - \al p(n-p)}{np - (\al p -\al +1)(n-p)}.$$
Then
$$1-\ga = \frac{(\al -1)(n-p)}{np - (\al p -\al +1)(n-p)} =\frac{n-p}n \al\theta \in (0, \al \theta).$$
Using \eqref{eq:big1} and the H\"older inequality, we get
\begin{align*}
\|f\|_{\al p}^{\al p} & = \|f_+\|_{\al p}^{\al p} + \|f_-\|_{\al p}^{\al p}\\
&\leq G(n,\al,p)^{\al p} \lt(\mE_{\lam,p}(f_+)^{\al p\theta} \|f_+\|_{\al p-\al +1}^{(1-\theta)\al p} + \mE_{\lam,p}(f_-)^{\al p\theta} \|f_-\|_{\al p-\al +1}^{(1-\theta)\al p}\rt)\\
&\leq G(n,\al,p)^{\al p}(\mE_{\lam, p}(f_+)^{\al\theta p/(1-\ga)} + \mE_{\lam, p}(f_-)^{\al\theta p/(1-\ga)})^{1-\ga}\|f\|_{\al p -\al +1}^{(1-\theta)\al p}\\
&\leq G(n,\al,p)^{\al p}(\mE_{\lam, p}(f_+)^{p} + \mE_{\lam, p}(f_-)^{p})^{\al \theta} \|f\|_{\al p -\al +1}^{(1-\theta)\al p}\\
&\leq G(n,\al,p)^{\al p} \mE_{\lam,p}(f)^{\al p\theta} \|f\|_{\al p -\al +1}^{(1-\theta)\al p},
\end{align*}
here we used the inequality $a^{\al\theta/(1-\ga)} + b^{\al\theta/(1-\ga)} \leq (a +b)^{\al\theta/(1-\ga)}$ for any $a, b\geq 0$ since $\al \theta /(1-\ga) =n/(n-p) > 1$. This inequality is strict unless $a$ or $b$ are equal to $0$. Hence equality in \eqref{eq:big1} implies that either $\mE_{\lam,p}(f_+)$ or $\mE_{\lam,p}(f_-)$ is equal to $0$. Thus, $f$ does not change sign on $\R^n$. The rest of the proof is similar to the one of Corollary \ref{Sob}, using the result of Del Pino and Dolbeault \cite{DD1,DD2} about the extremal functions of the classical Gagaliardo-Nirenberg inequality.

\emph{Proof of part $(ii)$:} Part $(ii)$ is proved in the same way.
\end{proof}

\subsection{General affine logarithmic-Sobolev inequality}
The classical sharp $L_p$ logarithmic-Sobolev inequality was proved by Del Pino and Dolbeault \cite{DD1,DD2} for $p\in (1,n)$ and was extended to all $p> 1$ by Gentil \cite{Gen}. This inequality states that for $n\geq 1$, $p >1$ and any functions $f\in W^{1,p}(\R^n)$ such that $\int_{\R^n} |f(x)|^p dx =1$, we have
\begin{equation}\label{eq:logSob}
\Ent(|f|^p) = \int_{\R^n} |f|^p \ln (|f|^p) dx \leq \frac np \ln\lt(\mL_{p} \int_{\R^n} |\na f|^p dx\rt),
\end{equation}
where 
$$\mL_p = \pi^{-p/2}\frac pn \lt(\frac{p-1}e\rt)^{p-1} \lt(\frac{\Gam(1 +n/2)}{\Gam(1 + n(p-1)/p)}\rt)^{p/n}.$$
Moreover, equality holds in \eqref{eq:logSob} if and only if for some $\si > 0$ and $x_0\in \R^n$,
$$f(x) = \pi^{-n/2p}(\si p^{(p-1)/p})^{n/p}\lt(\frac{\Gam(1 +n/2)}{\Gam(1 + n(p-1)/p)}\rt)^{1/p} e^{- |\si(x-x_0)|^{p/(p-1)}}.$$ 
The case $p=2$ of \eqref{eq:logSob} is very interesting since it is equivalent to Gross's logarithmic-Sobolev inequality for Gaussian measure \cite{Gross} which has many important applications in analysis, probability and quantum field theory. 

The affine counterpart of \eqref{eq:logSob} was proved by Zhai \cite{Zhai11} for $\lam =1/2$ and $p\in (1,n)$ by exploiting the affine P\'olya-Szeg\"o principle and the classical logarithmic-Sobolev inequality \eqref{eq:logSob}. Another proof of this inequality can be found in \cite{HJM15} for all $p > 1$. The asymmetric affine logarithmic-Sobolev inequality (with $\lam =0$) was recently established in \cite{HS09}. In this subsection, we prove a general affine logarithmic-Sobolev inequality for all $\lam\in[0,1]$ and $p >1$. Moreover, all extremal functions are characterized.

\begin{corollary}\label{affinLS}
Let $n\geq 1$ and $p >1$, then for any functions $f\in W^{1,p}(\R^n)$ such that $\int_{\R^n} |f(x)|^p dx =1$, we have
\begin{equation}\label{eq:affineLS}
\Ent(|f|^p) \leq \frac np \ln\lt(\mL_p \mE_{\lam,p}(f)^p\rt),
\end{equation}
where $\mL_p$ is as above. Equality holds in \eqref{eq:affineLS} if and only if for some $\si > 0$ and $A\in SL_n$ (the set of $n\times n$ matrices of determinant $1$), we have
$$f(x) = \pi^{-n/2p} (\si p^{(p-1)/p})^{n/p}\lt(\frac{\Gam(1 +n/2)}{\Gam(1 + n(p-1)/p)}\rt)^{1/p} e^{- |\si A(x-x_0)|^{p/(p-1)}}.$$
\end{corollary}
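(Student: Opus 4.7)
The plan is to reduce \eqref{eq:affineLS} to the classical $L_p$ logarithmic--Sobolev inequality \eqref{eq:logSob} via the general affine P\'olya--Szeg\"o principle, and then to recover the extremal functions by combining the Brothers--Ziemer-type part of Theorem \ref{maintheorem} with the known equality case of \eqref{eq:logSob}. For the inequality itself, observe that $\Ent(|f|^p)$ depends only on the distribution of $|f|$, which coincides with that of $f^\star$, so $\Ent(|f|^p)=\Ent(|f^\star|^p)$. Applying \eqref{eq:logSob} to $f^\star$ and then \eqref{eq:PSprin} (or equivalently the normalization $\mE_{\lam,p}(f^\star)=\|\na f^\star\|_p$) gives
\[
\Ent(|f|^p)=\Ent(|f^\star|^p)\le \tfrac{n}{p}\ln\!\bigl(\mL_p\|\na f^\star\|_p^p\bigr)=\tfrac{n}{p}\ln\!\bigl(\mL_p\mE_{\lam,p}(f^\star)^p\bigr)\le \tfrac{n}{p}\ln\!\bigl(\mL_p\mE_{\lam,p}(f)^p\bigr).
\]

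For the equality case, the first step will be a sign argument showing that any extremizer has constant sign. Writing $f=f_+-f_-$ one checks, exactly as in the proof of Corollary \ref{Sob}, that $\|u\|_{p,\lam,f}^{\,p}=\|u\|_{p,\lam,f_+}^{\,p}+\|u\|_{p,1-\lam,f_-}^{\,p}$. Thus $B_{\lam,p}(f)$ is the $L_p$ harmonic radial combination of $B_{\lam,p}(f_+)$ and $B_{1-\lam,p}(f_-)$, and the dual $L_p$ Brunn--Minkowski inequality yields
\[
\mE_{\lam,p}(f)^p\ge \mE_{\lam,p}(f_+)^p+\mE_{1-\lam,p}(f_-)^p=\mE_{\lam,p}(f_+)^p+\mE_{\lam,p}(f_-)^p,
\]
using $\mE_{\lam,p}=\mE_{1-\lam,p}$. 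With $\al_1=\|f_+\|_p^p$, $\al_2=\|f_-\|_p^p$, $\al_1+\al_2=1$, apply the already proved affine log-Sobolev inequality to the normalizations $f_{\pm}/\al_{\pm}^{1/p}$ and add the two resulting estimates (using that $f_+$ and $f_-$ have disjoint supports, so $\Ent(|f|^p)=\int f_+^p\ln f_+^p+\int f_-^p\ln f_-^p$). Jensen's inequality applied to $\al_1\ln(\mE_{\lam,p}(f_+)^p/\al_1)+\al_2\ln(\mE_{\lam,p}(f_-)^p/\al_2)$, together with the subadditivity above, then gives
\[
\Ent(|f|^p)\le \al_1\ln\al_1+\al_2\ln\al_2+\tfrac{n}{p}\ln\!\bigl(\mL_p\mE_{\lam,p}(f)^p\bigr).
\]
Since $\al_1\ln\al_1+\al_2\ln\al_2<0$ whenever $0<\al_1,\al_2<1$, equality in \eqref{eq:affineLS} forces $\al_1=0$ or $\al_2=0$, i.e.\ $f$ has constant sign; without loss of generality $f\ge 0$.

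Once $f\ge 0$, equality in \eqref{eq:affineLS} propagates to equality in both (i) the classical $L_p$ log-Sobolev inequality for $f^\star$ and (ii) the general affine P\'olya--Szeg\"o principle \eqref{eq:PSprin}. From (i) and Gentil's characterization, $f^\star$ equals the centered Gaussian $\pi^{-n/2p}(\si p^{(p-1)/p})^{n/p}(\Gam(1+n/2)/\Gam(1+n(p-1)/p))^{1/p}\,e^{-|\si x|^{p/(p-1)}}$, for some $\si>0$. For this $f^\star$ one has $|\na f^\star|=0$ only at the origin, while $\{0<f^\star<\mathrm{ess\,sup}\,f\}=\R^n\setminus\{0\}$, so condition \eqref{eq:cond} is fulfilled. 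The equality statement of Theorem \ref{maintheorem} therefore applies and produces $x_0\in\R^n$ and an origin-centered ellipsoid $E$ with $f(x)=f^E(x+x_0)$ a.e.; since $V(E)=\om_n$ we may write $E=AB_2^n$ with $A\in SL_n$ (up to sign of $\det A$, which is immaterial), so $\|x\|_E=|A^{-1}x|$ and $f^E(x)=f^\star(A^{-1}x)$. Substituting the explicit Gaussian form of $f^\star$ and relabeling $A^{-1}\mapsto A$, $x_0\mapsto-x_0$, gives precisely the extremal form stated. The main technical point of the proof is the sign argument in the second paragraph; once that is in hand the reduction to the rearrangement and Theorem \ref{maintheorem} is routine.
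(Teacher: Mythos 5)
Your proposal is correct and follows essentially the same route as the paper: reduce \eqref{eq:affineLS} to the classical $L_p$ log-Sobolev inequality via the general affine P\'olya--Szeg\"o principle, rule out sign changes using the splitting $f=f_+-f_-$, the $p$-additivity of $\|\cdot\|_{p,\lam,\cdot}^p$ and a Jensen-type estimate that isolates the entropy deficit $\al_1\ln\al_1+\al_2\ln\al_2<0$, and then invoke the Brothers--Ziemer-type part of Theorem \ref{maintheorem} together with Gentil's characterization of extremizers. The only cosmetic difference is that you run the Jensen step through the concavity of $\ln$ rather than the convexity of $e^t$ as in the paper; these are equivalent.
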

\begin{proof}
Inequality \eqref{eq:affineLS} is proved by combining the general affine P\'olya-Szeg\"o principle and the classical logarithmic-Sobolev inequality \eqref{eq:logSob}. 

Suppose that equality holds in \eqref{eq:affineLS} for a function $f$ which is not $0$ a.e. on $\R^n$. We will prove that the function $f$ does not change sign on $\R^n$. Indeed, writing again $f = f_+ - f_-$, we show that either $f_+$ or $f_-$ is zero a.e. on $\R^n$. Indeed, if this does not hold, define $a_1 = \|f_+\|_p, a_2 =\|f_-\|_p$, $f_1 = f_+/a_1$ and $f_2 = f_-/a_2$. Then by using the strict convexity of the function $e^t$ and function $t \ln t$, and the inequality \eqref{eq:affineLS}, we have (note that $a_1^p + a_2^p =1$)
\begin{align*}
\mL_p\Om_{\lam,p}(f)^p& \geq a_1^p \mL_p\mE_{\lam,p}(f_1)^p + a_2^p \mL_p\mE_{\lam,p}(f_2)^p\\
&\geq  \lt(a_1^p e^{p \Ent(f_1^p)/n} + a_2^p e^{p \Ent(f_2^p)/n}\rt)\\
&\geq e^{(a_1^p \Ent(f_1^p) + a_2^p \Ent(f_2^p))p/n} \\
&\geq e^{\Ent(|f|^p) p/n},
\end{align*}
with equality if and only if $\Ent(f_1^p) = \Ent(f_2^p)$ and $f_1 =f_2$ a.e. on $\R^n$. This implies that $f =0$ a.e in $\R^n$ which contradicts the assumption on $f$. Hence $f$ does not change sign on $\R^n$. The rest of the proof is similar with the one of Corollary \ref{Sob}.
\end{proof}


\section{Stability of the affine Sobolev inequality}
As in Section \S3, for $f\in BV(\R^n)$ which is not $0$ a.e. on $\R^n$ and $x\in \R^n$, we define
$$\|x\|_{1,f} = \int_{\R^n} |\la x, \si_f(y)\ra| d(|Df|)(y),$$
and
$$B_1(f) = \{x\in \R^n\, :\, \|x\|_{1,f} \leq 1\}.$$

\begin{lemma}
Suppose that $f \in BV(\R^n)$ is not $0$ a.e. on $\R^n$, then $B_1(f)$ is an origin-symmetric convex body in $\mK_0^n$.
\end{lemma}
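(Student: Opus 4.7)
The plan is to follow the template of Lemma \ref{key1}, translated verbatim from the weak gradient $\na f\,dx$ to the $\R^n$-valued Radon measure $Df = \si_f\,d(|Df|)$. I need to verify that $B_1(f)$ is closed, convex, bounded, contains the origin in its interior, and is origin-symmetric. The first three structural properties come for free: the functional $x\mapsto \|x\|_{1,f}$ is positively homogeneous of degree one, subadditive (Minkowski's inequality inside the integrand), continuous on $\R^n$, and even in $x$ (since $|\la -x,\si_f\ra|=|\la x,\si_f\ra|$), so $B_1(f)$ is automatically a closed, convex, origin-symmetric subset of $\R^n$.

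What remains is to produce uniform two-sided bounds
$$0 < c \leq \|u\|_{1,f} \leq C < \infty \quad \forall\, u\in S^{n-1},$$
which together give both boundedness of $B_1(f)$ and containment of a small Euclidean ball centered at the origin. The upper bound is immediate: since $|\si_f(y)|=1$ for $|Df|$-a.e. $y$, Cauchy--Schwarz yields $\|u\|_{1,f} \leq |Df|(\R^n) < \infty$ for every $u\in S^{n-1}$, so one may take $C = |Df|(\R^n)$.

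The main obstacle is the lower bound $\|u\|_{1,f}>0$ on $S^{n-1}$, and this is where the BV adaptation of the argument used in Lemma \ref{key1} has to be executed. By continuity of $\|\cdot\|_{1,f}$ on $S^{n-1}$ together with compactness, it suffices to rule out $\|u\|_{1,f}=0$ for a single $u\in S^{n-1}$. Suppose, for contradiction, that such a $u$ exists. Then $\la u,\si_f(y)\ra=0$ for $|Df|$-a.e. $y$, whence the scalar Radon measure $\la u,Df\ra$ vanishes identically, so $D_u f = 0$ in the sense of distributions. Mollifying with a standard kernel $\psi_\de(x) = \de^{-n}\psi(x/\de)$ (with $\psi\in C_0^\infty(\R^n)$, $\psi\geq 0$, $\int\psi = 1$) produces $f_\de = f\star\psi_\de\in C^\infty(\R^n)$ with $D_u f_\de\equiv 0$ pointwise; consequently $f_\de$ is constant along every line parallel to $u$. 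Since $f\in L^{n/(n-1)}(\R^n)$ by the very definition of $BV(\R^n)$ recalled in the excerpt, the same holds for $f_\de$, and Fubini forces those constants to vanish, so $f_\de\equiv 0$. Letting $\de\to 0^+$ yields $f=0$ a.e. on $\R^n$, contradicting the hypothesis on $f$ and completing the proof.
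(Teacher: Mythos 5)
Your proof is correct and takes essentially the same route as the paper's: you reduce to the two-sided bounds $c\leq\|u\|_{1,f}\leq C$ on $S^{n-1}$, get the upper bound from $|\si_f|=1$, and for the lower bound argue by contradiction that $\la u,\si_f\ra=0$ $|Df|$-a.e.\ gives $D_uf=0$ distributionally, then mollify and use $f_\de\in L^{n/(n-1)}(\R^n)$ plus constancy along lines parallel to $u$ to force $f_\de\equiv 0$ and hence $f=0$ a.e. The only cosmetic difference is that the paper writes out the integration-by-parts identity explicitly while you phrase it as the vanishing of the scalar measure $\la u,Df\ra$; otherwise the arguments match step for step.
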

\begin{proof}
It is easy to see that $B_1(f)$ is convex and symmetric. We next show that $B_1(f)$ is compact and contains the origin in its interior. To do this, it is enough to show that there exist constants $C,c > 0$ such that
$$c \leq \|u\|_{1,f} \leq C,\quad \forall\, u\in S^{n-1}.$$
Since $|\la u, \si_f(y)\ra|\leq 1$ a.e. with respect to $|Df|$, we can choose $C = |Df|(\R^n)$ in the previous inequality. For the existence of $c > 0$, it suffices to show that 
\begin{equation}\label{eq:e1}
\int_{\R^n} |\la u, \si_f(y)\ra| d(|Df|)(y) > 0,\quad\forall\, u \in \R^n.
\end{equation}
Otherwise, if there exists $u \in S^{n-1}$ such that $\int_{\R^n} |\la u, \si_f(y)\ra| d(|Df|)(y) =0$, then we must have $\la u, \si_f(y)\ra =0$ a.e. with respect to $|Df|$. For any function $\vphi\in C_0^\infty(\R^n)$, we have
$$0 = \int_{\R^n} \la u,\si_f(y) \ra \vphi(y) d(|Df|)(y) = -\int_{\R^n} f(y) D_u\vphi(y) dy.$$
Thus $D_uf =0$ in the distributional sense. Denote $f_\de = f\star \psi_\de$, $\de > 0$ with $\psi_\de$ defined as above. Then $f_\de \in C^\infty(\R^n)$ and $f_\de$ converges to $f$ in $ L_{n/(n-1)}(\R^n)$ as $\de \to 0$.

We have $D_u f_\de = D_u(f\star \psi_\de) = (D_u f)\star \psi_\de$, that is, $D_u f_\de(x) = 0$ for all $x\in \R^n$. Thus $f_\de$ is constant in the direction $u$, but $f_\de\in L_{n/(n-1)}(\R^n)$, hence $f_\de(x) =0$ for all $x\in \R^n$. Taking $\de\to 0$ implies that $f(x) = 0$ for a.e. $x\in \R^n$ with respect to Lebesgue measure. This contradicts  the assumption on $f$. Hence \eqref{eq:e1} holds, and our proof is complete.
\end{proof}
Let $K_1(f)$ denote the convex body whose support function is
$$h(K_1(f),u) = \int_{S^{n-1}} \|\xi\|_{1,f}^{-n-1} |\la u, \xi\ra| d\xi.$$
Using the same proof as in Section \S3, we have the following result.
\begin{lemma}\label{case1}
Suppose that $f \in BV(\R^n)$ is not $0$ a.e. with respect to Lesbegue measure. Then
$$K_1(f) = (n+1) \al_{n,1} V(B_1(f)) \Gam_1B_1(f),$$
$$\int_{\R^n} h(K_1(f),\si_f(y)) d(|Df|)(y) = \lt(\frac{\mE_1(f)}{c_{n,1}}\rt)^{-n},$$
$$\mE_1(f) \geq \lt(\frac{\om_n}{V(K_1(f))}\rt)^{1/n} \int_{\R^n} h(K_1(f),\si_f(y)) d(|Df|)(y),$$
with equality if and only if $B_1(f)$ is an origin-centered ellipsoid, and
$$\int_{\R^n} h(K, \si_{f^K}(y)) d(|Df^K|)(y) = \mE_1(f^\star),$$
for any origin-symmetric convex bodies $K$ such that $V(K) = \om_n$.
\end{lemma}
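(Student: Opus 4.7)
The plan is to repeat, mutatis mutandis, the arguments of Section \S3, with the measure $|\na f|^p\,dx$ replaced by the total-variation measure $d(|Df|)$ and the unit direction of $\na f$ replaced by $\si_f$, fixing $p=1$ and $\lam=1/2$ throughout (the latter because the $L_1$ functional already contains the absolute value $|\la\cdot,\si_f\ra|$). The first identity is proved exactly as in Lemma \ref{key2}: writing
\[
h(K_1(f),u) = \int_{S^{n-1}} \|\xi\|_{1,f}^{-n-1}\,|\la u,\xi\ra|\,d\xi = (n+1)\int_{S^{n-1}}\int_0^{\|\xi\|_{1,f}^{-1}} |\la u,\xi\ra|\, r^n\, dr\, d\xi,
\]
recognising the right-hand side as $(n+1)\int_{B_1(f)}|\la u,y\ra|\,dy$ in spherical coordinates, and invoking \eqref{eq:generalsupport} with $p=1$, $\lam=1/2$.

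The second identity is a direct Fubini computation on the total-variation measure: swapping $\int_{\R^n}d(|Df|)$ with $\int_{S^{n-1}}d\xi$ and using the definition of $\|\xi\|_{1,f}$ yields
\[
\int_{\R^n} h(K_1(f),\si_f(y))\,d(|Df|)(y) = \int_{S^{n-1}}\|\xi\|_{1,f}^{-n-1}\,\|\xi\|_{1,f}\,d\xi = n V(B_1(f)) = \lt(\mE_1(f)/c_{n,1}\rt)^{-n},
\]
where the last equality is the $p=1$ version of \eqref{eq:affineenergy1}. The inequality for $\mE_1(f)$ is then immediate from Theorem \ref{HS09} applied to $B_1(f)$ at $p=1$, $\lam=1/2$: the bound $V(\Gam_1 B_1(f)) \geq V(B_1(f))$ combined with the first identity gives $V(K_1(f)) \geq ((n+1)\al_{n,1})^n V(B_1(f))^{n+1}$, and substituting this into the second identity produces the claimed inequality, with equality exactly when $B_1(f)$ is an origin-centred ellipsoid.

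For the fourth identity I would mimic the coarea computation behind \eqref{eq:last}, reinterpreted in the BV setting. Since $f^K(x) = f^*(\om_n \|x\|_K^n)$ is radially decreasing in $\|\cdot\|_K$, the measure $Df^K$ is supported along radial slices with $\si_{f^K}$ pointing antiparallel to $\si_K$; as $K$ is origin-symmetric, $h(K,\si_{f^K}) = h(K,\si_K)$. The coarea formula then factors the integral through $\pa K$, and the identity $\int_{\pa K} h(K,\si_K)\,d\mH^{n-1} = n V(K) = n\om_n$ reduces the expression to the one-dimensional integral $(n\om_n)^2 \int_0^\infty (-(f^*)'(\om_n t^n))\, t^{2(n-1)}\,dt$, which equals $|Df^\star|(\R^n) = \mE_1(f^\star)$ by the $p=1$ normalisation \eqref{eq:normalized}. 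The principal obstacle will be justifying the coarea manipulations rigorously in the BV regime, where $f^*$ need only be of bounded variation on $(0,\infty)$ and may carry a singular or jump part; I would handle this either by approximating $f^*$ by smooth monotone profiles converging in $L^1$ and in total variation and then passing to the limit in the smooth identity \eqref{eq:last}, or by a direct appeal to the Fleming--Rishel coarea formula for BV functions.
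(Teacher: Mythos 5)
Your proposal is correct in essence and follows exactly the route the paper intends (the paper itself only says ``Using the same proof as in Section \S3''): Fubini in polar coordinates for the first two identities, the Busemann--Petty centroid inequality of Theorem \ref{HS09} for the third, and the coarea computation behind \eqref{eq:last} for the fourth, with the measure-theoretic replacement $|\nabla f|\,dx \rightsquigarrow d|Df|$ and $\nabla f/|\nabla f| \rightsquigarrow \sigma_f$ correctly identified. Your choice of $\lambda=1/2$ (so that $(1-\lambda)\langle\cdot\rangle_+ + \lambda\langle\cdot\rangle_- = \tfrac12|\langle\cdot\rangle|$) is the right way to reconcile the BV functionals with the Section 3 framework, and the Fleming--Rishel / smooth-approximation handling of the coarea step is a legitimate way to settle the BV rigor.

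One point to be alert to: your chain $\int_{\R^n} h(K_1(f),\sigma_f)\,d|Df| = nV(B_1(f)) = \bigl(\mE_1(f)/c_{n,1}\bigr)^{-n}$ is internally inconsistent with the definitions as written in Section 5. With $\|x\|_{1,f}=\int|\langle x,\sigma_f\rangle|\,d|Df|$ (no factor $\tfrac12$) and $\mE_1(f)=\tfrac{c_{n,1}}{2}\bigl(nV(B_1(f))\bigr)^{-1/n}$, one actually has $\bigl(\mE_1(f)/c_{n,1}\bigr)^{-n}=2^n\,nV(B_1(f))$, not $nV(B_1(f))$. The same factor of $2$ affects the first identity, since for origin-symmetric $B_1(f)$ the formula \eqref{eq:generalsupport} at $p=1$, $\lambda=1/2$ gives $h(\Gamma_1 B_1(f),u)=\tfrac{1}{2\alpha_{n,1}V(B_1(f))}\int_{B_1(f)}|\langle u,y\rangle|\,dy$, so your computation produces $K_1(f)=2(n+1)\alpha_{n,1}V(B_1(f))\,\Gamma_1 B_1(f)$ rather than the constant stated in the lemma. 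These are normalization defects already present in the paper's statement (the factor $\tfrac12$ in $\mE_1$ is needed precisely to make $\mE_1(f^\star)=|Df^\star|(\R^n)$), and they cancel out where Lemma \ref{case1} is actually used in the proof of Theorem \ref{maintheo1}; but it would be cleaner not to assert ``the $p=1$ version of \eqref{eq:affineenergy1}'' as giving the second displayed equality verbatim, and instead track the $2^n$ explicitly or work with $nV(B_1(f))$ throughout.
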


For origin-symmetric convex bodies $K,L$ in $\R^n$, the Banach-Mazur distance between $K$ and $L$ is defined by
$$\de_{BM}(K,L) = \min\{\de \geq 0\, :\, K \subset \Phi(L) \subset e^\de K \quad \text{for}\quad \Phi \in GL_n\}.$$  
For a convex body $K$ in $\R^n$, its projection body $\Pi_1K$ is defined by
$$h(\Pi_1K,u)=\frac{V_{n-1}({\rm Pr}_u( K))}{\om_{n-1}} = \frac1{2\om_{n-1}} \int_{S^{n-1}} |\la u,v\ra| \, dS(K,v),$$
where $V_{n-1}$ denotes the $(n-1)$ dimensional volume and ${\rm Pr}_u (K)$ is the projection of $K$ on the hyperplane $u^\bot$. Denote by $\Pi_1^*K$ the polar of $\Pi_1 K$. The Petty projection inequality is one of the classical affine isoperimetric inequalities \cite{Pet71}. It states that for any convex body $K$ in $\R^n$,
$$V(\Pi_1^*K) V(K)^{n-1} \leq \om_n^n,$$
with equality if and only if $K$ is an ellipsoid. A stability estimate for the Petty projection inequality recently proved by B\"or\"oczky \cite{KB13} reads as follows: For any origin-symmetric convex body $K$ in $\R^n$,
\begin{equation}\label{eq:StabPetty}
V(\Pi_1^*K) V(K)^{n-1}\leq (1 -\ga \de_{BM}(K,B_2^n)^{cn}) \om_n^n,
\end{equation}
where $c = 1680$ and $\ga > 0$ depends only on $n$.

In order to establish Theorem \ref{maintheo1}, we will prove a stability estimate for the Busemann-Petty centroid inequality. In fact, this stability estimate is derived from \eqref{eq:StabPetty} by using a class reduction technique introduced by Lutwak \cite{Lut86} and the improved dual mixed volume inequality \eqref{eq:improved}. For origin-symmetric convex bodies $K,L$ in $\R^n$, let us denote
$$A(K,L) = \frac{V(K \De (a A))}{V(K)}\quad\text{with}\quad a = \lt(\frac{V(K)}{V(L)}\rt)^{1/n}.$$
It is well known that
\begin{equation}\label{eq:comparedistance}
c_1(n) \de_{BM}(K,L)^n \leq \inf\{A(K,\psi(L))\,:\, \psi \in GL_n\} \leq c_2(n) \de_{BM}(K,L),
\end{equation}
where $c_1(n),c_2(n)$ depend only on $n$ (see \cite[Section $5$]{BH14} and \cite[Section $8$]{BH15}).

Our stability estimate for the Busemann-Petty centroid inequality can be now stated as follows.
\begin{proposition}\label{Stabcentroid}
Let $K$ be an origin-symmetric convex body in $\R^n$. Then
\begin{equation}\label{eq:Stabcentro}
\frac{V(\Gam_1K)}{V(K)} \geq 1 + C(n) \de_{BM}(K,B_2^n)^{cn},
\end{equation}
where $C(n)$ depends only on $n$ and $c =1680$. Consequently, we have
$$\frac{V(\Gam_1K)}{V(K)} \geq 1 + \t{C}(n) \lt(\inf\{A(K,E)\,:\, E\in \mE\}\rt)^{cn},$$
where $\t{C}(n)$ depends only on $n$.
\end{proposition}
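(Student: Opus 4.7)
The plan is to trace Lutwak's class-reduction derivation of the Busemann--Petty centroid inequality \eqref{eq:HS09} (in the case $p=1$) from the Petty projection inequality and to insert quantitative information at every step. Both stability ingredients collected earlier enter: B\"or\"oczky's stability \eqref{eq:StabPetty} for the Petty projection inequality, and the improved dual mixed volume inequality \eqref{eq:improved} from Proposition \ref{improved}.

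First I would recall the classical identity used by Lutwak: for an origin-symmetric convex body $K$, applying Fubini to the integral definitions of $\Gam_1 K$ and $\Pi_1 K$ expresses $V(\Gam_1 K)/V(K)$ as (a power of) a dual mixed volume $\t{V}_{-1}(K,L)$ for a star body $L$ naturally associated with $\Gam_1 K$. The dual mixed volume inequality \eqref{eq:dualineq} bounds this below by $V(K)^{(n+1)/n} V(L)^{-1/n}$; after the integral identities are invoked, the remaining ratio reduces to a power of $\om_n^n /(V(\Pi_1^* K) V(K)^{n-1})$, and the Petty projection inequality then closes the loop, reproducing $V(\Gam_1 K) \geq V(K)$.

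To upgrade this qualitative chain to \eqref{eq:Stabcentro}, I would replace the Petty projection inequality in the last step by its stable version \eqref{eq:StabPetty}, which injects the factor $1 + C'(n)\,\de_{BM}(K, B_2^n)^{cn}$ into the chain. Simultaneously, the improved dual mixed volume inequality \eqref{eq:improved} provides a second source of gain, coming from the symmetric-difference term $V(K\De \ga L)^2$; this is needed to deal with the regime in which the two stability mechanisms compete, via a case distinction on whether $\de_{BM}(K, B_2^n)$ is above or below some threshold $\eta(n)$. In either regime, at least one of the two improvements is quantitatively effective, and after absorbing universal constants one obtains the claimed constant $C(n)$ in \eqref{eq:Stabcentro}. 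For the second assertion, I would apply the right-hand inequality in \eqref{eq:comparedistance}, which gives $\de_{BM}(K, B_2^n) \geq c_2(n)^{-1} \inf\{A(K,E) : E\in \mE\}$, and hence $\de_{BM}(K, B_2^n)^{cn} \geq c_2(n)^{-cn} (\inf\{A(K,E) : E\in \mE\})^{cn}$; setting $\t{C}(n) = C(n)\,c_2(n)^{-cn}$ finishes the second claim.

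The main obstacle I anticipate is the precise bookkeeping within Lutwak's class-reduction machinery: connecting $V(\Gam_1 K)$, $V(\Pi_1^* K)$ and $V(K)$ with the correct exponents through the intermediate star body $L$, and then tracking, in the two-regime argument sketched above, exactly when each of the stability gains from \eqref{eq:StabPetty} and \eqref{eq:improved} is active, so that the total improvement is always at least of order $\de_{BM}(K, B_2^n)^{cn}$ regardless of how close $K$ is to being an ellipsoid.
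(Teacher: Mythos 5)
Your plan correctly identifies the two stability ingredients (B\"or\"oczky's quantitative Petty projection inequality and the improved dual mixed volume inequality from Proposition \ref{improved}), and the derivation of the second assertion from the first via \eqref{eq:comparedistance} is exactly right. However, the chain you sketch has a structural error reflecting a misreading of Lutwak's class reduction, and as written it would not yield \eqref{eq:Stabcentro}.

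In Lutwak's argument one specializes the identity $V_1(L,\Gam_1K)=\frac{\om_n}{V(K)}\t{V}_{-1}(K,\Pi_1^*L)$ at $L=\Gam_1K$, which gives $V(\Gam_1K)=\frac{\om_n}{V(K)}\t{V}_{-1}(K,\Pi_1^*\Gam_1K)$. Consequently the Petty projection inequality that ``closes the loop'' is applied to $\Gam_1K$, not to $K$: the quantity that appears is $\om_n^n/(V(\Pi_1^*\Gam_1K)V(\Gam_1K)^{n-1})$, whereas you wrote $\om_n^n/(V(\Pi_1^*K)V(K)^{n-1})$. This is not a typo one can fix locally; with the Petty inequality applied to $K$ the class reduction runs in the wrong direction. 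The consequence for the quantitative step is serious: the stability estimate \eqref{eq:StabPetty} then produces a gain of order $\de_{BM}(\Gam_1K,B_2^n)^{cn}$, \emph{not} $\de_{BM}(K,B_2^n)^{cn}$ as you assert. Meanwhile \eqref{eq:improved} produces a separate gain of order $A(K,\Pi_1^*\Gam_1K)^2$, which via \eqref{eq:comparedistance} controls $\de_{BM}(K,\Pi_1^*\Gam_1K)^{2n}$. Neither gain on its own controls $\de_{BM}(K,B_2^n)$, and the proposed case distinction on the size of $\de_{BM}(K,B_2^n)$ does not bridge this gap. The missing idea is elementary and decisive: $\Pi_1^*$ is $GL_n$-contravariant up to scaling, so if $E\subset\Gam_1K\subset e^{\de_1}E$ then $e^{-(n-1)\de_1}\Pi_1^*E\subset\Pi_1^*\Gam_1K\subset\Pi_1^*E$, whence $\de_{BM}(\Pi_1^*\Gam_1K,B_2^n)\leq(n-1)\de_1$ with $\de_1=\de_{BM}(\Gam_1K,B_2^n)$. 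The triangle inequality for the Banach--Mazur distance then gives
\[
\de_{BM}(K,B_2^n)\leq\de_{BM}(K,\Pi_1^*\Gam_1K)+(n-1)\de_{BM}(\Gam_1K,B_2^n),
\]
so the two gains \emph{add} (they do not compete), and no threshold argument is needed: one gets $\de\geq c(n)(\de_1+\de_{BM}(K,\Pi_1^*\Gam_1K))^{cn}\geq C(n)\de_{BM}(K,B_2^n)^{cn}$ directly.
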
 
\begin{proof}
We start with the following equality given in \cite{Lut90} for any star body $K$ and convex body $L$,
$$V_1(L, \Gam_1K) = \frac{\om_n}{V(K)} \t{V}_{-1}(K, \Pi_1^*L).$$
Taking $L = \Gam_1K$ leads to
$$V(\Gam_1K) = \frac{\om_n}{V(K)} \t{V}_{-1}(K, \Pi_1^*\Gam_1K).$$
In order to simplify the notation, let us denote
$$\de = \frac{V(\Gam_1K)}{V(K)}- 1,\quad \de_1 =\de_{BM}(\Gam_1 K,B_2^n), \quad\text{and}\quad \eta = A(K, \Pi_1^*\Gam_1K)$$
According to \eqref{eq:improved} with $p=1$, we have
$$V(\Gam_1K) \geq \om_n V(K)^{1/n} V(\Pi_1^*\Gam_1K)^{-1/n} \lt(1 + \frac{\eta^2}{8n} \rt),$$
or equivalently
\begin{equation}\label{eq:abcd}
\frac{V(\Gam_1K)}{V(K)} \geq \om_n^{n} V(\Gam_1K)^{1-n} V(\Pi_1^*\Gam_1K)^{-1} \lt(1 + \frac{\eta^2}{8n}\rt)^n.
\end{equation}
Plugging \eqref{eq:StabPetty} into \eqref{eq:abcd} implies that
\begin{align*}
1+ \de &\geq (1- \ga \de_1^{cn})^{-1} \lt(1 + \frac{\eta^2}{8n}\rt)^n \geq (1+ \ga \de_1^{cn})\lt(1 + \frac{\eta^2}{8} \rt)\geq 1 + \ga \de_1^{cn} + \frac{\eta^2}8.
\end{align*}
According to \eqref{eq:comparedistance}, we have $\eta \geq c_1(n) \de_{BM}(K,\Pi_1^*\Gam_1K)^n$. Thus, there exists $c(n) > 0$ depending only on $n$ such that  
\begin{equation}\label{eq:1}
\de \geq c(n)(\de_1+ \de_{BM}(K,\Pi_1^*\Gam_1K))^{cn}.
\end{equation}

Let $E$ be the origin-centered ellipsoid $E$ such that $E\subset \Gam_1K \subset e^{\de_1} E$. Then 
$$\Pi_1 E \subset \Pi_1 \Gam_1K \subset e^{(n-1)\de_1} \Pi_1E,$$
or equivalently 
$$e^{-(n-1)\de_1} \Pi_1^*E \subset \Pi_1^*\Gam_1K \subset \Pi_1^*E.$$
Thus, we have
\begin{equation}\label{eq:2}
\de_{BM}(\Pi_1^*\Gam_1K, B_2^n) \leq (n-1) \de_1.
\end{equation}

Plugging \eqref{eq:2} into \eqref{eq:1} and using the triangle inequality for the Banach-Mazur distance yields the desired inequality \eqref{eq:Stabcentro}.
\end{proof}

We are now ready to prove Theorem \ref{maintheo1}.

\begin{proof}[Proof of Theorem \ref{maintheo1}:] By homogeneity, we can assume that $\|f\|_{n'} =1$. Denote 
$$K = \lt(\frac{\om_n}{V(K_1(f))}\rt)^{1/n} K_1(f).$$
Then $K$ is an origin-centered convex body and $V(K) =\om_n$. Using Lemma \ref{case1}, Proposition \ref{Stabcentroid} and the affine Sobolev inequality, we have
\begin{align}\label{eq:Fi}
\de_a(f)& = \frac{\mE_1(f)}{n\om_n^{1/n}} \lt(1 - \lt(\frac{V(B_1(f))}{V(\Gam_1B_1(f))}\rt)^{1/n}\rt) + \frac{\int_{\R^n} h_K(\si_f) d(|Df|)}{n\om_n^{1/n}} -1\notag\\
&\geq 1 - \frac{1}{(1+C(n) \de_{BM}(B_1(f),B_2^n)^{cn})^{1/n}} + \frac{\int_{\R^n} h_K(\si_f) d(|Df|)}{n\om_n^{1/n}} -1.
\end{align}
In particular, we have
$$\de_a(f) \geq 1 - \frac{1}{(1+C(n) \de_{BM}(B_1(f),B_2^n)^{cn})^{1/n}},$$
or equivalently,
\begin{equation}\label{eq:123}
\de_{BM}(B_1(f),B_2^n) \leq \lt(\frac{(1-\de_a(f))^{-n} -1}{C(n)}\rt)^{1/cn}.
\end{equation}
Since $\de_{BM}(B_1(f),B_2^n)\leq (\ln n)/2$ by John's theorem, \eqref{eq:123} implies the existence of a constant $C_1(n)$ depending only on $n$ such that
$$\de_{BM}(B_1(f),B_2^n) \leq C_2(n) \de_a(f)^{1/cn}.$$
From the definition of the $L_1$ centroid body, it is easy to prove that
\begin{equation}\label{eq:hoang}
\de_{BM}(K,B_2^n) =\de_{BM}(\Gam_1B_1(f),B_2^n) \leq C_3(n) \de_a(f)^{1/cn},
\end{equation}
where $C_3(n)$ depends only on $n$.

If $\de$ denotes $\de_{BM}(K,B_2^n)$, then there exists an origin-centered ellipsoid such that 
$$E\subset K\subset e^{\de}E.$$
Taking $\psi \in SL_n$ such that $(\om_n/V(E))^{1/n} E = \psi (B_2^n)$, we have
$$e^{-\de} \psi (B_2^n) \subset K \subset e^{\de} \psi (B_2^n).$$
Consequently, since $\de \leq (\ln n)/2$ by John's theorem, we have
\begin{equation}\label{eq:thom}
V(K \De \psi (B_2^n)) \leq \om_n (e^{n\de} -e^{-n\de}) \leq C_4(n) \om_n \de,
\end{equation}
where $C_4(n)$ depends only on $n$.
 
The quantitative anisotropic Sobolev inequality (see \cite{FMP13}) implies the existence of $a\not=0$ and $x_0\in \R^n$ such that 
$$\frac{\int_{\R^n} h_K(\si_f) d(|Df|)}{n\om_n^{1/n}} -1 \geq \frac1{C_5(n)} \lt(\int_{\R^n} |f - a \chi_{x_0 +ar(a)K}|^{n'} dx\rt)^2,$$
where $r(a) = \om_n^{-1/n}|a|^{-n'}$ is chosen such that $\int_{\R^n} |a\chi_{x_0+ ar(a)K}|^{n'} dx =1$, and $C_4(n)$ depends only on $n$. Consequently, we have
\begin{equation}\label{eq:ngan}
\lt(\int_{\R^n} |f - a \chi_{x_0 +ar(a)K}|^{n'} dx\rt)^2 \leq C_5(n) \de_a(f).
\end{equation}

From \eqref{eq:hoang}, \eqref{eq:thom} and \eqref{eq:ngan}, we have
\begin{align*}
\int_{\R^n}& |f - a \chi_{x_0 +ar(a)\psi B_2^n}|^{n'} dx\\ 
&\leq 2^{1-n'}\lt(\int_{\R^n} |f - a \chi_{x_0 +ar(a)K}|^{n'} dx + \int_{\R^n} | a \chi_{x_0 +ar(a)K}- a \chi_{x_0 +ar(a)\psi (B_2^n)}|^{n'} dx\rt)\\
&\leq 2^{1-n'}\lt((C_5(n) \de_a(f))^{1/2} + |a|^{n'+n} r(a)^n V(K \De \psi (B_2^n))\rt)\\
&\leq 2^{1-n'}\lt((C_5(n) \de_a(f))^{1/2} + C_4(n) \de\rt)\\
&\leq 2^{1-n'}\lt((C_5(n) \de_a(f))^{1/2} + C_4(n) C_3(n) \de_a(f)^{1/cn}\rt)\\
&\leq \al(n) \de_a(f)^{1/cn}.
\end{align*}
This completes the proof of Theorem \ref{maintheo1}.
\end{proof}

We finally remark that Theorem \ref{maintheo1} immediately implies a stability estimate for  the affine $L_1$ logarithmic--Sobolev inequality. Let us recall that if $n\geq 2$, then for every $f\in BV(\R^n)$ which is not $0$ a.e. on $\R^n$, we have 
\begin{equation}\label{eq:affineLS1}
\int_{\R^n} \frac{|f(x)|}{\|f\|_1} \ln\lt(\frac{|f(x)|}{\|f\|_1}\rt) dx \leq n \ln \lt(\frac{\mE_1(f)}{n \om_n^{1/n} \|f\|_1}\rt).
\end{equation}
Inequality \eqref{eq:affineLS1} follows immediately from the affine Sobolev inequality \eqref{eq:ZhangWang} by the following argument. Obviously, we can assume $\|f\|_1 =1$ by homogeneity. Thus, by Jensen's inequality and the affine Sobolev inequality \eqref{eq:ZhangWang}, we have 
\begin{align*}
\int_{\R^n} |f(x)| \ln (|f(x)|) dx &= \frac1{n'-1} \int_{\R^n} |f(x)| \ln \lt(|f(x)|^{n'-1}\rt) dx\\
&\leq \frac1{n'-1} \ln\lt(\int_{\R^n} |f(x)|^{n'} dx\rt)\\
&\leq n \ln \lt(\frac{\mE_1(f)}{n \om_n^{1/n}}\rt),
\end{align*}
which is \eqref{eq:affineLS1}. Moreover, we have
\[
\frac{\|f\|_1}{\|f\|_{n'}} \exp\lt(\frac 1n\int_{\R^n} \frac{|f(x)|}{\|f\|_1} \ln\lt(\frac{|f(x)|}{\|f\|_1}\rt) dx\rt) \leq 1.
\]
Hence, if we set 
\[
\de_{aLS}(f) = \frac{\mE_1(f)}{n \om_n^{1/n}\|f\|_{n'}}-\frac{\|f\|_1}{\|f\|_{n'}} \exp\lt(\frac 1n\int_{\R^n} \frac{|f(x)|}{\|f\|_1} \ln\lt(\frac{|f(x)|}{\|f\|_1}\rt) dx\rt).
\]
We then have
\[
\de_{aLS}(f) \geq \de_a(f).
\]
As a corollary of the previous inequality and Theorem \ref{maintheo1}, we deduce the following stability estimate for affine logarithmic--Sobolev inequality \eqref{eq:affineLS1}.
\begin{corollary}\label{cor}
For $n\geq 2$ and $f \in BV(\R^n)$, we have 
\[
d_a(f,\mM) \leq \alpha(n) \,\de_{aLS}(f)^{1/cn},
\]
with $c= 1680$ and $\alpha(n)$ is the constant given in Theorem \ref{maintheo1}.
\end{corollary}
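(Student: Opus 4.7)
The plan is to read this as an immediate consequence of Theorem \ref{maintheo1}, once the deficit $\de_{aLS}(f)$ is shown to dominate the deficit $\de_a(f)$ appearing in that theorem. Almost all the analytic content has already been assembled in the paragraph preceding the statement: Jensen's inequality applied to the convex function $t\mapsto t\ln t$ (equivalently, to the exponential in the form written) gives
\[
\frac{\|f\|_1}{\|f\|_{n'}} \exp\!\lt(\frac 1n\int_{\R^n} \frac{|f(x)|}{\|f\|_1} \ln\!\lt(\frac{|f(x)|}{\|f\|_1}\rt) dx\rt) \leq 1,
\]
so I would first invoke this inequality to conclude $\de_{aLS}(f)\ge \de_a(f)$ straight from the definitions.

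Once that comparison is in hand, the remainder is essentially a one-line application: by Theorem \ref{maintheo1},
\[
d_a(f,\mM)\ \leq\ \alpha(n)\,\de_a(f)^{1/cn}\ \leq\ \alpha(n)\,\de_{aLS}(f)^{1/cn},
\]
which is exactly the claim of Corollary \ref{cor}. The constant $\alpha(n)$ and the exponent $1/cn$ with $c=1680$ are inherited from Theorem \ref{maintheo1} without change, since the deficit comparison is sharp in the direction we need (i.e.\ only increases the right-hand side).

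There is effectively no obstacle: all nontrivial work (the stability of the Busemann--Petty centroid inequality, the quantitative anisotropic Sobolev inequality, and their combination) is already absorbed into Theorem \ref{maintheo1}. The only small care to exercise is the degenerate case $f\equiv 0$ a.e., which is handled by the convention $d_a(0,\mM)=0$ and $\de_{aLS}(0)=0$ consistent with the one used for $\de_a$, and the reduction $\|f\|_1=1$ or $\|f\|_{n'}=1$ by homogeneity, which makes the Jensen step transparent. Thus the proof is simply: state $\de_{aLS}(f)\ge\de_a(f)$ (quoting the Jensen computation from the excerpt), then apply Theorem \ref{maintheo1}, and we are done.
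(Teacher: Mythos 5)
Your proposal is correct and follows exactly the route the paper takes: the paper proves $\de_{aLS}(f)\ge\de_a(f)$ via Jensen's inequality (applied with the probability measure $|f|\,dx/\|f\|_1$ and the concave logarithm, which after exponentiating gives the displayed bound by $1$), and then reads the corollary off Theorem~\ref{maintheo1} using monotonicity of $t\mapsto t^{1/cn}$.
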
 
An Euclidean counterpart of Corollary \ref{cor} can be found in \cite{FMP13}.
\section*{Acknowledgments}
The author is grateful to thank the anonymous referee for her/his great help in improving the English presentation of this paper and for useful comments that improved the exposition of this paper. This work was supported by a grant from the European Research Council (grant number 305629).

\end{document}